\newtheorem*{teoA}{Theorem A}
\newtheorem*{teoB}{Theorem B}
\newtheorem*{teoC}{Theorem C}
\newtheorem{proposition}{Proposition}
\newtheorem{theorem}[proposition]{Theorem}
\newtheorem{corollary}[proposition]{Corollary}
\newtheorem{lemma}[proposition]{Lemma}
\newtheorem{remark}[proposition]{Remark}
\theoremstyle{definition}
\def\Mor{\mathrm{Mor}}
\def\Aut{\mathrm{Aut}}
\def\Obj{\mathrm{Obj}}
\def\M{\ensuremath{\mathcal{M}}}
\def\FF{\ensuremath{\mathbb{F}}}
\def\ZZ{\ensuremath{\mathbb{Z}}}
\def\End{\operatorname{End}}
\def\Aut{\mathrm{Aut}}
\def\ker{\operatorname{Ker}}
\def\Hom{\mathrm{Hom}}
\def\id{\textrm{id}}
\def\im{\operatorname{Im}}
\date{\today}
\title{Transporting cohomology in Lazard correspondence}
\author{Oihana Garaialde Oca\~{n}a}
 \email{ogaraialde@gmail.com}
 \address{Departamento de Matem\'aticas\\
Facultad de Ciencia y Tecnolog�a\\
UPV/EHU University of the Basque Country
}
\author{Jon Gonz\'alez-S\'anchez}
 \email{jon.gonzalez@ehu.es}
 \address{Jon Gonz\'alez-S\'anchez\\
Departamento de Matem\'aticas\\
Facultad de Ciencia y Tecnolog�a\\
UPV/EHU University of the Basque Country
}
\begin{document}

\begin{abstract} 
 Lazard correspondence provides an isomorphism of categories between finitely generated nilpotent pro-$p$ groups of nilpotency class smaller than $p$ and finitely generated nilpotent $\ZZ_p$-Lie algebras of nilpotency class smaller than $p$. Denote by $H_{Gr}^i$ and $H_{Lie}^i$ the group cohomology functors and the Lie cohomology functors respectively. The aim of this paper is to show that for $i=0$, $1$ and $2$, and for a given category of modules the cohomology functors $H_{Gr}^i\circ \textbf{exp}$ and $H^i_{Lie}$ are naturally equivalent. A similar result is proven for $i=3$ and the relative cohomology groups.
\end{abstract} 

\maketitle

\section{Introduction}
Throughout the text let $p$ denote a fixed prime and $\ZZ_p$ the $p$-adic integers. In his work of 1954 M. Lazard established an isomorphism of categories between finitely generated nilpotent pro-$p$ groups of nilpotency class smaller than $p$ and finitely generated nilpotent $\ZZ_p$-Lie algebras of nilpotency class smaller than $p$ (see \cite{La}). This well known result of Lazard has had strong influence in the study of finite $p$-groups. For example, classifying  ``small" finite $p$-groups is equivalent to classifying ``small" nilpotent $\ZZ_p$-Lie algebras which turned out to be fundamental in the classification of ``small" finite $p$-groups (see \cite{OV}). This isomorphism of categories is given by the exponential and logarithm functors. In the context of complex representations, the orbit method of A. Kirillov can be applied to nilpotent pro-$p$ groups of nilpotency class smaller than $p$. This method establishes a bijection between the complex irreducible representations of the group and the orbits of the coadjoint action of the group in the dual space of its Lie algebra (see \cite{Ka} and \cite{GS}). 

If $G$ is a finite $p$-group and $K$ is a field of characteristic $p$, there is only one irreducible representation over $K$, the trivial one. Therefore irreducible representations over $K$ do not give information about the group. However, the homology and cohomology groups provide more information about the group and about the modules. For example, if $G$ is a finite $p$-group and $\FF_p$ denotes the trivial module, $H^1(G,\FF_p)$ gives the minimal number of generators of $G$ and $H^2(G,\FF_p)$ the minimal number of relations.

In a recent paper of B. Eick, M. Horn and S. Zandi it has been proved that if $G$ is finite $p$-group of nilpotency class smaller than $p-1$ and $L$ is its Lie algebra then, the Schur multiplier of $G$ and $L$ are isomorphic (see \cite{EHZ}). Note that the Schur multipliers of $G$ and $L$ are given by the cohomology groups $H^2(G,C_{p^\infty})$ and $H^2(L,C_{p^\infty})$ respectively. 

The aim of this text is to study cohomology groups of small dimensions in the context of the Lazard correspondence. In order to do it we will introduce certain  categories of modules over groups $\mathbf{Tpl}_{Gr}^{c,d}$ (see Section \ref{s:groups}) and certain categories of modules over Lie algebras $\mathbf{Tpl}_{Lie}^{c,d}$ (see Section \ref{s:Lie}). In these categories $c$ denotes the nilpotency class of the group or of the Lie algebra while $d$ denotes the length action of the group or of the Lie algebra over the corresponding module. In our first result we give an isomorphism of categories using truncated exponential and logarithm functors.

\begin{teoA}
Let $c$ and $d$ be smaller than $p$. Then, 
$$\mathbf{Exp}: \mathbf{Tpl}_{Lie}^{c,d} \rightarrow  \mathbf{Tpl}_{Gr}^{c,d} \quad  \text{and} \quad \mathbf{Log}: \mathbf{Tpl}_{Gr}^{c,d} \rightarrow \mathbf{Tpl}_{Lie}^{c,d}$$ 
are isomorphisms inverse to each other. 
\end{teoA}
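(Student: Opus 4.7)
The plan is to reduce the statement to the classical Lazard correspondence on the underlying nilpotent objects, so that the substantive work concerns only the module component of the structures. Write $(L, M, \rho)$ for an object of $\mathbf{Tpl}_{Lie}^{c,d}$ and $(G, M, \alpha)$ for an object of $\mathbf{Tpl}_{Gr}^{c,d}$; by forgetting $M$ and $\rho$ (respectively $M$ and $\alpha$) one recovers Lazard's bijection on the underlying objects, so the only thing to check beyond Lazard is that the module pieces of $\mathbf{Exp}$ and $\mathbf{Log}$ are mutually inverse and land in the right category.

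First, I would verify that $\mathbf{Exp}$ is well defined. Given $(L, M, \rho) \in \mathbf{Tpl}_{Lie}^{c,d}$, the candidate group action sends $g = \exp(l) \in G$ and $m \in M$ to
$$g \cdot m \;=\; \sum_{i \geq 0} \frac{\rho(l)^i(m)}{i!},$$
a finite sum with at most $d$ nonzero terms by the action-length hypothesis, and whose denominators $i!$ for $i < p$ are invertible in $\mathbb{Z}_p$. To show this is a group action, given $g_j = \exp(l_j)$ one must establish
$$\exp(\rho(l_1)) \circ \exp(\rho(l_2)) \;=\; \exp\!\bigl(\rho(\mathrm{BCH}(l_1,l_2))\bigr)$$
inside the associative $\mathbb{Z}_p$-algebra of endomorphisms of $M$. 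Since $\rho$ is a Lie homomorphism and the Baker--Campbell--Hausdorff series truncated at degree $c < p$ has $p$-invertible denominators, this reduces to the formal identity $\exp(x)\exp(y) = \exp(\mathrm{BCH}(x,y))$ inside the associative subalgebra generated by $\rho(l_1)$ and $\rho(l_2)$, where both sides collapse to polynomials thanks again to the action-length hypothesis. A parallel argument, based on the truncated series $l \cdot m = \sum_{i \geq 1} \frac{(-1)^{i+1}}{i}(\alpha(g) - 1)^i(m)$ together with the formal identity $\log(uv) = \mathrm{BCH}(\log u, \log v)$, shows that $\mathbf{Log}$ lands in $\mathbf{Tpl}_{Lie}^{c,d}$ and defines a Lie-module action.

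Next, I would check that $\mathbf{Exp}\circ\mathbf{Log}$ and $\mathbf{Log}\circ\mathbf{Exp}$ are the identity on objects. On the underlying Lie algebra/group this is the classical Lazard theorem, and on the module piece it is the formal inversion $\exp \circ \log = \mathrm{id}$ and $\log \circ \exp = \mathrm{id}$, applied to the nilpotent endomorphisms $\rho(l)$ and $\alpha(g) - 1$; their nilpotency degree is strictly less than $p$, so the truncations cause no loss of information. A morphism in either category is a compatible pair consisting of a Lie/group map and a $\mathbb{Z}_p$-module map intertwining the actions; the module map is preserved unchanged by both $\mathbf{Exp}$ and $\mathbf{Log}$, and the Lie/group map is transported by the classical Lazard correspondence, so functoriality and bijectivity on morphisms are immediate from the object statement.

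The main technical obstacle is verifying the identity $\exp(x)\exp(y) = \exp(\mathrm{BCH}(x,y))$ inside $\mathrm{End}(M)$ used in the first step. Although this is a universal formal identity, its application here requires the simultaneous control of two truncations: the class bound $c < p$ on $L$, which makes $\mathrm{BCH}(l_1,l_2)$ a finite Lie polynomial with $p$-invertible denominators, and the action-length bound $d < p$, which makes every exponential series in $\rho(l_j)$ terminate. Organising these truncations consistently, and tracking the $p$-adic integrality of every coefficient appearing along the way, is the bulk of the work; once done, the rest of the argument is formal.
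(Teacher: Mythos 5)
Your proposal is correct and follows essentially the same route as the paper's own proof (the two well-definedness lemmas for $\mathbf{Log}$ and $\mathbf{Exp}$ and the theorem that follows them): transport the action through the truncated $\exp$/$\log$ series, use the Baker--Campbell--Hausdorff identity $\exp(x)\exp(y)=\exp(H(x,y))$ inside $\operatorname{End}(M)$ together with the nilpotency of $\rho(l)$ and $\alpha(g)-\operatorname{id}$ forced by the action-length bound $d<p$, and conclude by formal inversion of $\exp$ and $\log$ plus the classical Lazard correspondence on the underlying objects and on morphisms. The only point you leave implicit that the paper spells out is the check that the action length itself is preserved, i.e. $[M,{}_i\,\mathbf{exp}(L)]\subseteq[M,{}_i\,L]$ and the analogous inclusion for $\mathbf{log}$, which is what guarantees the functors land in $\mathbf{Tpl}_{Gr}^{c,d}$ and $\mathbf{Tpl}_{Lie}^{c,d}$ rather than merely in the ambient categories of triples.
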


We use this result to describe cohomology groups  of the pro-$p$ group as cohomology groups of the Lie algebra.

\begin{teoB}\label{theoremB} Denote by  $H^i_{Lie}:\mathbf{Tpl}_{Lie}^{c,d} \longrightarrow \textbf{Ab}$ and $H^i_{Lie}:\mathbf{Tpl}_{Gr}^{c,d} \longrightarrow \textbf{Ab}$ the natural cohomology functors. Then
\begin{enumerate}
\item If  $c,d <p$, then $H_{Lie}^0$ and $H_{Gr}^0 \circ \mathbf{Exp}$ are naturally equivalent. 
\item If $c<p$ and $d<p-1$, then $H_{Lie}^1$ and  $H_{Gr}^1 \circ \mathbf{Exp}$ are naturally equivalent.
\item If $c+d<p$, then $H_{Lie}^2$ and $H_{Gr}^2 \circ \mathbf{Exp}$ are naturally equivalent.
\end{enumerate}
\end{teoB}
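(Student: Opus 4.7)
The plan is to realize each cohomology group on both sides by an intrinsic construction (submodule of invariants, splittings of a semidirect product, equivalence classes of abelian extensions) and then to apply Theorem A, or Lazard correspondence more generally, to an auxiliary object larger than the original $(L,M)$ in order to identify these constructions. The stated hypotheses on $c$ and $d$ are precisely what is needed to keep the auxiliary object in the Lazard regime.

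For (1), both $H^0_{Gr}(G,M)$ and $H^0_{Lie}(L,M)$ compute an invariant submodule of $M$. By Theorem A the $G$-action on $M$ is given by the truncated exponential $\mathbf{exp}(x)\cdot m=\sum_{k=0}^{d-1}\tfrac{x^{k}\cdot m}{k!}$, whose denominators are units in $\ZZ_p$ since $d<p$. A direct manipulation shows that annihilation by every $x\in L$ is equivalent to being fixed by every $\mathbf{exp}(x)\in G$, so the identity map of $M$ implements a natural equivalence of functors.

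For (2), I would work with the semidirect product: a class in $H^1_{Gr}(G,M)$ is a splitting of $1\to M\to G\ltimes M\to G\to 1$ modulo $M$-conjugacy (equivalently a crossed homomorphism modulo principal ones), and the Lie counterpart is a splitting of $0\to M\to L\ltimes M\to L\to 0$ modulo $M$-conjugacy (equivalently a derivation modulo inner derivations). A computation of the lower central series of $L\ltimes M$, combined with the hypothesis $c<p$ and $d<p-1$, places this Lie algebra in the Lazard range. Lazard then yields an isomorphism $\mathbf{exp}(L\ltimes M)\cong G\ltimes M$ of split extensions, with the $G$-action on $M$ matching the prescribed one by Theorem A. Since $\mathbf{exp}$ is functorial it sends Lie-subalgebra splittings to subgroup splittings and preserves the $M$-conjugacy relation, so the construction descends to the natural equivalence $H^1_{Lie}(L,M)\cong H^1_{Gr}(G,M)$.

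The argument for (3) is structurally the same but tighter. A class in $H^2_{Lie}(L,M)$ is represented by an abelian extension $0\to M\to\tilde L\to L\to 0$ with the prescribed $L$-action on the ideal $M$. An induction on $k\geq 1$ gives $\gamma_{c+k}(\tilde L)\subseteq L^{k-1}\cdot M$, using that $M$ is abelian as an ideal in $\tilde L$ and that the adjoint $\tilde L$-action on $M$ factors through the $L$-action. Hence $\gamma_{c+d+1}(\tilde L)=0$, and under $c+d<p$ the Lie algebra $\tilde L$ lies in the Lazard regime. The group $\tilde G:=\mathbf{exp}(\tilde L)$ is then an extension $1\to M\to\tilde G\to G\to 1$, and Theorem A applied to the adjoint identifies its induced $G$-module structure on $M$ with the original one. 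Functoriality of $\mathbf{log}$ provides the inverse and translates Baer equivalence of Lie extensions into Baer equivalence of group extensions, yielding the natural equivalence $H^2_{Lie}\cong H^2_{Gr}\circ\mathbf{Exp}$. I expect this last case to be the main obstacle: the delicate steps are the inductive bound $\gamma_{c+k}(\tilde L)\subseteq L^{k-1}\cdot M$ (which has no counterpart in the semidirect-product case) and the careful verification that Baer equivalences and the module structure on $M$ transport cleanly through $\mathbf{exp}/\mathbf{log}$.
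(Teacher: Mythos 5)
Your parts (1) and (3) follow essentially the same route as the paper: (1) is the same direct series manipulation showing that annihilation by $\psi(a)$ and fixedness under $\exp(\psi)(a)$ coincide, and (3) is the paper's argument verbatim in spirit — bound the class of $\tilde L$ (resp.\ $\tilde G$) by $c+d$ using $\gamma_{c+1}\subseteq M$ and $[M,\gamma_k(L)]\subseteq[M,{}_kL]$, apply Lazard, and check Baer sums and induced maps through the pull-back/push-out diagrams. Part (2) is where you genuinely diverge. The paper stays with its chosen model $H^1=\{$extensions $0\to M\to\tilde M\to\ZZ_p\to 0\}$ and observes that $[\tilde M,L]\leq M$ forces $[\tilde M,{}_{d+1}L]=0$, so the enlarged module $\tilde M$ lies in $\mathbf{Tpl}_{Lie}^{c,d+1}$; this is exactly where the hypothesis $d<p-1$ enters, and it keeps the $H^1$ and $H^2$ proofs uniform (same Baer-sum diagrams). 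You instead pass to the semidirect product and the cocycle description (crossed homomorphisms mod principal ones versus derivations mod inner ones). That route is workable and arguably needs only $\max(c,d)<p$ — the class of $L\ltimes M$ is at most $\max(c,d)$, not $c+d$ — so your attribution of the hypothesis $d<p-1$ to the lower-central-series computation does not actually locate where it would be used; your argument seems to prove the statement under the weaker hypothesis $c,d<p$. The one step you gloss over and should not: $\mathbf{exp}(L\ltimes M)$ and $\mathbf{exp}(L)\ltimes M$ are \emph{isomorphic} split extensions but are not equal as group laws on the set $L\times M$ (compare $(x,0)\cdot_{BCH}(0,n)=(x,n+\tfrac12\psi(x)n+\cdots)$ with $(x,0)(0,n)=(x,\exp(\psi(x))n)$ in the semidirect product), so graphs of derivations are not literally graphs of crossed homomorphisms; you must carry the explicit comparison isomorphism along and then re-verify that pointwise addition of cocycles and the subgroup of principal cocycles (equivalently, $M$-conjugacy of complements, which on the Lie side is $\exp(\operatorname{ad}m)=1+\operatorname{ad}m$ since $M$ is abelian) are matched. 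Since the paper \emph{defines} $H^1$ in $\mathbf{Tpl}_{Gr}$ and $\mathbf{Tpl}_{Lie}$ via module extensions, you would also need to invoke the standard identification of that description with the cocycle one on both sides.
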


In degree three cohomology we give a similar result but for relative cohomology (see Proposition \ref{cross3}).

Theorem B.(3) provides a natural proof of the result of  B. Eick, M. Horn and S. Zandi on the Schur multipliers of the groups and the Lie algebras.

\begin{teoC}Let $\textbf{Gr}_{p-2}$ be the category of a finite $p$-group of nilpotency class smaller than $p-1$ and $\textbf{Lie}_{p-2}$ the category of finite and nilpotent $\ZZ_p$-Lie algebras of nilpotency class smaller than $p-1$. Denote by 
\begin{eqnarray*}
  \M_{Gr} & : & \textbf{Gr}_{p-2}\longrightarrow \textbf{Ab} \ \ \ \text{and} \\
  \M_{Lie} & :  & \textbf{Lie}_{p-2}\longrightarrow \textbf{Ab} 
\end{eqnarray*}
the group and Lie algebra Schur Multiplier functors respectively. Then 
$\M_{Gr}\circ \textbf{exp}$ and $\M_{Lie}$ are naturally equivalent. In particular, for $L\in \textbf{Lie}_{p-2}$ one has $\M_{Gr}(\textbf{exp}(L))\cong \M_{Lie} (L)$.
\end{teoC}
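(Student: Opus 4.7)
The plan is to derive Theorem C as a direct consequence of Theorem B(3), once we interpret the Schur multiplier functors as cohomology functors attached to a specific trivial module. Recall that for a finite $p$-group $G$ one has $\M_{Gr}(G)=H^2_{Gr}(G,C_{p^\infty})$ and for a finite nilpotent $\ZZ_p$-Lie algebra $L$ one has $\M_{Lie}(L)=H^2_{Lie}(L,C_{p^\infty})$, where in both cases $C_{p^\infty}$ is endowed with the trivial action.

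First I would verify that the trivial module $C_{p^\infty}$ provides a well-defined object in the categories $\mathbf{Tpl}_{Lie}^{c,d}$ and $\mathbf{Tpl}_{Gr}^{c,d}$ introduced in Sections \ref{s:Lie} and \ref{s:groups}. Since the action is trivial, the filtration $V_0\supseteq V_1\supseteq\cdots$ associated to the action collapses at the first step, so the length $d$ of the action equals $1$. Next, for $L\in\textbf{Lie}_{p-2}$, the pair $(L,C_{p^\infty})$ with trivial action lies in $\mathbf{Tpl}_{Lie}^{c,1}$ with $c\le p-2$, so that $c+d\le p-1<p$ and Theorem B(3) is applicable. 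A second verification is that the isomorphism $\mathbf{Exp}$ of Theorem A takes the trivial $L$-module structure on $C_{p^\infty}$ to the trivial $\mathbf{exp}(L)$-module structure, so that $\mathbf{Exp}(L,C_{p^\infty})=(\mathbf{exp}(L),C_{p^\infty})$; this is immediate from the definition of $\mathbf{Exp}$, since it is given by a truncated exponential series in the action, which on a trivial action reduces to the identity. Once these two points are in place, Theorem B(3) yields
\[
\M_{Lie}(L)=H^2_{Lie}(L,C_{p^\infty})\cong H^2_{Gr}\bigl(\mathbf{exp}(L),C_{p^\infty}\bigr)=\M_{Gr}(\mathbf{exp}(L)),
\]
naturally in $L$, which is the statement of Theorem C.

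The only subtle point, and the one that deserves some care, is bookkeeping the length of the action and checking that the framework of $\mathbf{Tpl}_{Lie}^{c,d}$ accommodates the non-finitely-generated coefficient module $C_{p^\infty}$; up to writing $C_{p^\infty}=\varinjlim_n C_{p^n}$ one reduces to finite trivial coefficients, where functoriality under $\mathbf{Exp}$ is ensured by Theorem A, and then passes to the colimit. No genuinely new ingredient is needed beyond Theorems A and B.
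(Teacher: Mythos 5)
Your proposal is correct and follows essentially the same route as the paper: identify the Schur multipliers with $H^2(\cdot,C_{p^\infty})$ for the trivial action (which has action length $d=1$, so $c+d\le p-1<p$ and Theorem B(3) applies), write $C_{p^\infty}=\varinjlim C_{p^n}$, apply the natural isomorphism of Theorem B(3) to each finite stage, and pass to the colimit. Your explicit bookkeeping of the action length is a useful detail the paper leaves implicit, but the argument is the same.
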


The paper is divided as follows. In the second section we state the Lazard corresponde for pro-$p$ groups and Lie rings. In the third and fourth sections we introduce the categories of modules $\mathbf{Tpl}_{Gr}^{c,d}$ and $\mathbf{Tpl}_{Lie}^{c,d}$ and then we recall the definition of the cohomology groups of small dimension. In the fifth section we define the $\mathbf{Exp}$ and $\mathbf{Log}$ functors between the categories of modules and we prove Theorem A. In the next section we prove Theorem B and finally, in the last section, we prove Theorem C.

\section{Lazard correspondence for finite nilpotent pro-$p$ groups and Lie rings}

Let $X$ be a set and $A(X)$ denote the $\ZZ_p$-algebra of non-commuting polynomials over $X$. In fact, $A(X)$ is the free associative $\ZZ_p$-algebra over $X$. Consider $A_{(p)}(X)$ the ideal generated by the monomials of degree $p$. Then $A(X)_p=A(X)/A_{(p)}(X)$ is the free associative $\ZZ_p$-algebra of nilpotency class $p-1$ over $X$. The algebra $A(X)_p$ with the Lie bracket given by $[a,b]=ab-ba$ with $a,b\in A(X)_p$ is a $\ZZ_p$-Lie algebra and the $\ZZ_p$-Lie subalgebra $L(X)_p$ generated by $X$ is the free nilpotent $\ZZ_p$-Lie algebra of nilpotency class $p-1$. In $A(X)_p$ we can define the exponential and logarithm functions
\begin{eqnarray*}
exp (a) & = & \sum_{k=0}^{p-1} \frac{1}{k!} a^k\ \ \ \text{and} \\
log(1+a) & = & \sum_{k=1}^{p-1} (-1)^{k+1}\frac{a^k}{k},
\end{eqnarray*}
and the Baker-Campbell-Hausdorff formula
$$H(a,b) = log(exp(a) \cdot exp(b)).$$
By Lazard \cite{La} we know that $L(X)_p$ together with the multiplication coming from the Baker-Campbell-Hausdorff formula is the free pro-$p$ group of nilpotency class $p-1$ over $X$ which will be denoted by  $F(X)_p$. Furthermore, one can invert the Baker-Campbell-Hausdorff formula to get the Lie algebra structure from the group structure. Denote by
\begin{eqnarray*}
h_1(a, b) & = &  exp (log (a) +log (b))\ \ \   \text{and} \\
h_2(a,b) & = &  exp ([log (a),log(b)]).
\end{eqnarray*}
Then $L(X)_p$ is isomorphic to $(F(X)_p,h_1,h_2)$ as $\ZZ_p$-Lie algebras. Moreover this isomorphism can be extended to finitely generated nilpotent $\ZZ_p$-Lie algebras of nilpotency class smaller than $p$ and finitely generated nilpotent pro-$p$ groups of nilpotency class smaller than $p$ as it is stated in the next theorem.

\begin{theorem}[\cite{La} Lazard]
Let $Gr_p$ denote the category of finitely generated nilpotent pro-$p$ groups of nilpotency class smaller than $p$ and $Lie_p$ the category of finitely generated nilpotent $\ZZ_p$-Lie algebras of nilpotency class less than $p$. Then there exist isomorphisms of categories one inverse of the other 
\begin{eqnarray*}
\textbf{exp} : Lie_p\longrightarrow Gr_p \\
\textbf{log} :Gr_p\longrightarrow Lie_p,
\end{eqnarray*}
such that for $G\in Gr_p$ and $L\in Lie_p$ the following statements hold:
\begin{itemize}
\item[(a)] $\textbf{exp} (L)=(L,H)$,
\item[(b)] $\textbf{log} (G)=(G,h_1,h_2)$,
\item [(c)] $K$ is a subgroup of $G$ if and only if $log(K)$ is a sub-Lie algebra of $log(G)$,
\item[(d)] $K$ is a normal subgroup of $G$ if and only if $log(K)$ is an ideal in $log(G)$,
\item [(e)] Nilpotency class of $G$ = nilpotency class of $log(G)$,
\item [(f)] $\End(G)=\End(log(G))$. In particular, $\Aut(G) = \Aut(log(G))$.
\end{itemize}
\end{theorem}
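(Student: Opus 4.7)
The plan is to pass to the universal case---the free pro-$p$ group / free Lie algebra on a finite set $X$---and then extend to arbitrary objects by quotients, since every $G\in Gr_p$ (resp.\ $L\in Lie_p$) is a quotient of a finitely generated free object of the type already identified in the excerpt.

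First I would verify well-definedness of the Baker--Campbell--Hausdorff series on $L\in Lie_p$: the weight-$k$ term of $H$ has denominators whose prime factors are strictly less than $k+1$, so if $L$ has nilpotency class $c<p$ the series truncates at weight $c$ and all denominators are units of $\ZZ_p$. Thus $H:L\times L\to L$ is a well-defined polynomial map. In parallel, the inverse formulas $h_1$ and $h_2$ are well-defined on any $G\in Gr_p$ for the same arithmetic reason. Next I would check that $(L,H)$ is a group and that $(G,h_1,h_2)$ is a Lie algebra. For $(L,H)$, associativity, existence of neutral $0$ and inverse $-a$ are universal identities proved in $L(X)_p=F(X)_p$ and then transported to any $L$ via a surjection $L(X)_p\twoheadrightarrow L$. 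For $(G,h_1,h_2)$, bilinearity and Jacobi are handled symmetrically, reducing to the identity $(L(X)_p,h_1,h_2)\cong L(X)_p$, which is a direct consequence of $exp$ and $log$ being formal inverses in $A(X)_p$.

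The next step is functoriality and mutual inverseness. A group homomorphism $\vp:G\to G'$ commutes with the polynomial expressions $h_1(a,b)=exp(log\, a+log\, b)$ and $h_2(a,b)=exp([log\, a,log\, b])$, hence is a Lie algebra map $\textbf{log}(G)\to\textbf{log}(G')$; the converse is symmetric. Combined with the object-level inverse relation this shows $\textbf{exp}$ and $\textbf{log}$ are mutually inverse isomorphisms of categories, proving (a)--(b). Properties (c)--(f) then follow from the same polynomial dictionary: a subset is closed under $H$ iff it is closed under $+$ and $[\cdot,\cdot]$, proving (c); normality $gKg^{-1}\subseteq K$ translates to $[L,\log K]\subseteq\log K$ via the BCH expansion of the group commutator, giving (d); the lower central series coincide stepwise because $H(a,b)\equiv a+b$ and the group commutator is congruent to $[a,b]$ modulo higher weight terms, proving (e); and (f) is fully faithfulness applied to $\End(G)$, which is the content of (a)--(b) specialized to a single object.

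The main obstacle I anticipate is the clean bookkeeping of the universal identities---associativity of $H$, bilinearity and Jacobi for the inverted formulas, and the normal-form arguments needed for (c)--(e). Each one reduces ultimately to an identity in the truncated associative algebra $A(X)_p$, but organizing the reductions so that the hypothesis $c<p$ (and hence the invertibility of the denominators) is used transparently, and so that the passage from the free case to a general quotient really is automatic, is where I expect the bulk of the technical work to lie.
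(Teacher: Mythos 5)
Your sketch is correct and follows exactly the route the paper itself sets up: the paper does not actually prove this theorem (it is quoted from Lazard \cite{La}), but its Section 2 preamble constructs precisely your universal objects $A(X)_p$, $L(X)_p\cong F(X)_p$, the truncated BCH series $H$ and its inverses $h_1,h_2$, with the hypothesis $c<p$ entering through the invertibility of the denominators in $\ZZ_p$, and then extends to arbitrary finitely generated objects as quotients of the free case. So your proposal is a faithful (and more detailed) reconstruction of the standard argument the paper relies on.
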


\section{Group, modules and cohomology of low dimension}

\label{s:groups}
For a finite pro-$p$ group $G$, we recall that a $G$-module $M$ is a $\mathbb{Z}_p$-module  together with a homomorphism $\phi : G\to \Aut (M)$. We define the category $\mathbf{Tpl}_{Gr}$ that takes as objects the triples $(G,M,\phi )$ where $G$ is a finite pro-$p$ group, $M$ is a $\mathbb{Z}_p$-module and $\phi\in  \Hom (G,\Aut (M))$, that is, $M$ is a $\mathbb{Z}_p [G]$-module. Given two objects $(G_1,M_1,\phi_1 )$ and $(G_2,M_2,\phi_2 )$ in $\mathbf{Tpl}_{Gr}$, a morphism from $(G_1,M_1,\phi_1 )$ to $(G_2,M_2,\phi_2 )$ is defined by a pair $(\alpha ,\beta )$ where $\alpha \in \Hom (G_2,G_1)$, $\beta \in  \Hom (M_1,M_2)$ and for all $m_1 \in M_1$ and $g_2 \in G_2$ the following holds:

$$\beta((\phi_1 \circ \alpha(g_2))(m_1)) = (\phi_2(g_2))(\beta(m_1)).$$

\subsection{Defining $H^0$ in the category $\mathbf{Tpl}_{Gr}$} For a triple $(G,M,\phi )\in \mathbf{Tpl}_{Gr}$ the degree zero cohomology group is defined by
$$H^0(G,M,\phi)=H^0(G,M)=\{m \in M \; | \;  m \cdot \phi (g)= m, \; \forall g \in G\}.$$
Furthermore, if one has $(\alpha,\beta )$ a morphism between two triples $(G_1,M_1,\phi_1 )$ and $(G_2,M_2,\phi_2 )$, then 
$$\beta : H^0(G_1,M_1)\subseteq M_1\longrightarrow H^0(G_2,M_2)\subseteq M_2$$ 
is a homomorphism of abelian groups. In fact, 
$H^0(\cdot )$ defines a covariant functor between the categories $\mathbf{Tpl}_{Gr}$ 
and the category of abelian groups $\mathbf{Ab}$.

\subsection{Defining $H^1$ in the category $\mathbf{Tpl}_{Gr}$} For a triple $(G,M,\phi) \in \mathbf{Tpl}_{Gr}$, each equivalence class in $H^1(G,M,\phi)$ is in correspondence with the equivalent exact sequence of $G$-modules $0 \rightarrow M \rightarrow \tilde{M} \rightarrow \mathbb{Z}_p \rightarrow 0$. That is,

$$H^1(G,M,\phi)=\frac{\{\text{Equivalent extension of}\quad \mathbb{Z}_p[G]\text{-modules}\}}{\{\text{Equivalent split extensions of}\quad \mathbb{Z}_p[G]\text{-modules}\}}.$$

In order to give the additive structure over $H^1(G,M)$ one defines the sum of  any such two extensions $0 \rightarrow M \rightarrow \tilde{M}_1 \rightarrow \mathbb{Z}_p \rightarrow 0$ and $0 \rightarrow M \rightarrow \tilde{M}_2 \rightarrow \mathbb{Z}_p \rightarrow 0$ as follows:

\begin{equation}
 \label{eq:H1Gr-1}
     \xymatrix{ & & &0 \ar[d] &0 \ar[d] & \\
& & &M \ar[d]^{\tilde{f_1}} \ar[r]^{id} &M \ar[d]^{f_1} &\\
&0 \ar[r] &M \ar[d]^{id}\ar[r]^{\tilde{f_2}} &\tilde{M} \ar[d]^{i_2}\ar[r]^{i_1} &\tilde{M}_1 \ar[d]^{g_1} \ar[r] &0 \\
&0 \ar[r] &M \ar[d] \ar[r]^{f_2} &\tilde{M}_2 \ar[d] \ar[r]^{g_2} \ar[r] &\mathbb{Z}_p \ar[d] \ar[r] &0 \\
& &0 &0 &0 & }
\end{equation}
where $\tilde{M}\subseteq \tilde{M}_1 \oplus \tilde{M}_2$ is the pull-back of the arrows $\tilde{M}_1 \rightarrow \ZZ_p$ and $\tilde{M}_2 \rightarrow \ZZ_p$. Notice that $\tilde{f_1}$ and $\tilde{f_2}$ are defined by $\tilde{f_1}(m)=(f_1(m),0)$ and $\tilde{f_2}(m)=(0,f_2(m))$. Then, the following exact sequence is the sum of the above extensions:

$$0 \rightarrow \frac{M\oplus M}{\Delta^{-}(M)} \rightarrow \tilde{M} \rightarrow \mathbb{Z}_p \rightarrow 0$$
where $\Delta^{-}(M)$ denotes the anti-diagonal map of $M$ and $\frac{M \oplus M}{\Delta^{-}(M)}$ is isomorphic to $M$.

Furthermore, given $(G_1,M_1,\phi_1)$ and $(G_2,M_2,\phi_2)$ two objects in $\mathbf{Tpl}_{Gr}$ and a morphism $(\alpha ,\beta) \in \Mor((G_1,M_1,\phi_1), (G_2,M_2,\phi_2))$, there is an induced homomorphism in cohomology. 

Indeed, let $0 \rightarrow M_1 \rightarrow \tilde{M_1} \rightarrow \mathbb{Z}_p \rightarrow 0$ be an element in $H^1(G_1,M_1,\phi_1)$. On the one hand, it is easy to see that this short exact sequence of $G_1$-modules can be seen as a short exact sequence of $G_2$-modules by considering the action of $G_2$ over such modules given by the homomorphism $\phi_1\circ \alpha$. On the other hand, given $\beta: M_1 \rightarrow M_2$ consider the following diagram constructed by taking the push-out of the arrows $M_1 \rightarrow \tilde{M_1}$ and $M_1 \rightarrow M_2$:

\begin{equation}
\label{eq:H1Gr-2}
    \xymatrix{ 0 \ar[r] &M_1 \ar[d]^{\beta} \ar[r] &\tilde{M_1} \ar[d]^{\tilde{\beta}} \ar[r]^{j_1} &\mathbb{Z}_p\ar[d]^{id} \ar[r] &0 \\
                   0 \ar[r] &M_2 \ar[r] &\tilde{M_2} \ar[r]^{\theta} &\mathbb{Z}_p \ar[r] &0. }
\end{equation}
The homomorphism $\theta: M_2 \rightarrow \mathbb{Z}_p$ is given by $\theta(a,b)=j_1(a)$. In this way, the second row is an element in $H^1(G_2,M_2,\phi_2)$.

In fact one has that $H^1(\cdot )$ is a covariant functor between $\mathbf{Tpl}_{Gr}$ 
and $\mathbf{Ab}$.

\subsection{Defining $H^2$ in the category $\mathbf{Tpl}_{Gr}$} Each equivalence class of $H^2(G,M)$ classifies the equivalent extensions (see \cite[Chapt IV]{Br}) of $G$ by $M$ which consider the extensions of the form $1 \rightarrow M \rightarrow \tilde{G} \rightarrow G \rightarrow 1$, that is,

$$H^2(G,M)= \frac{\{\text{Equivalent extensions of G by M}\}}{\{\text{Equivalent split extensions of G by M}\}}.$$

By extensions of $G$ by $M$, we mean the above extensions that give rise to the given action of $G$ on $M$. The additive structure over $H^2(G,M)$ is given by the Baer sum \cite{Baer}. Namely, for two extensions of $G$ by $M$,

\begin{displaymath}
         \xymatrix{ 1 \ar[r] & M \ar[r]^{f_1} &\tilde{G_1} \ar[r]^{g_1} &G \ar[r] &1 }
\end{displaymath}
and 
\begin{displaymath}
         \xymatrix{ 1 \ar[r] & M \ar[r]^{f_2} &\tilde{G_2} \ar[r]^{g_2} &G \ar[r] &1 }
\end{displaymath}
the Baer sum is defined as follows: consider the next diagram

\begin{equation} \label{eq:H2Gr-1}
     \xymatrix{ & &1 \ar[d] &1 \ar[d] & \\ 
                  &  &M \ar[d] \ar[r]^{id} &M \ar[d]^{f_1} &  \\
                 1 \ar[r] &M \ar[d]^{id} \ar[r] &\tilde{H} \ar[d]^{i_2} \ar[r]^{i_1} &\tilde{G_1} \ar[d]^{g_1} \ar[r] &1 \\
                  1 \ar[r]&M \ar[r]^{f_2} &\tilde{G_2} \ar[d] \ar[r]^{g_2} &G \ar[d] \ar[r] &1 \\
                  & &1 &1 & }
\end{equation}
where $\tilde{H} \subset \tilde{G_1} \times \tilde{G_2}$ is the pull-back of the arrows $\tilde{G}_1\rightarrow G$ and $\tilde{G}_2\rightarrow G$. Now take 
$$\tilde{G}= \frac{\tilde{H}}{\{ (f_1(m),0)-(0,f_2(m)) \; | \; m \in M \}}.$$
Then, the Baer sum is the following extension:

\begin{equation*}
     \xymatrix{ 1 \ar[r] &M \ar[r]^{\tilde{f}} &\tilde{G} \ar[r]^{\tilde{g}} &G \ar[r] &1  }
\end{equation*}
where $\tilde{f}(m)=(f_1(m),0)=(0,f_2(m))$ and $\tilde{g}(a,b)=g_1(a)=g_2(b)$.

\medskip
Consider $(G_1,M_1,\phi_1)$ and $(G_2,M_2,\phi_2)$ two objects in $\mathbf{Tpl}_{Gr}$ and \
 $(\alpha, \beta) \in \Mor((G_1,M_1,\phi_1), (G_2,M_2,\phi_2))$. The induced homomorphism 
$$H^2(G_1,M_1,\phi_1) \rightarrow H^2(G_2,M_2,\phi_2)$$
is defined as follows: take a class in $H^2(G_1,M_1)$, that is, an extension,

$$1 \rightarrow M_1 \rightarrow \tilde{G_1} \rightarrow G_1 \rightarrow 1,$$
and consider the following diagram:

\begin{equation}
 \label{eq:H2Gr-2}
     \xymatrix{ 1  \ar[r] & M_1 \ar[rd]^{\tilde{i}_1} \ar[r]^{i_1}  &\tilde{G_1} \ar[r]^{\pi_1} &G_1 \ar[r] &1  \\
                 & & \tilde{H} \ar[r]^{\tilde{\pi_1}} \ar[u]^{\tilde{\alpha}} &G_2 \ar[u]^{\alpha} \ar[r] &1 }
\end{equation}
where 

$$\tilde{H}=\{(\tilde{g_1},g_2) \in \tilde{G_1} \times G_2 |  \pi_1(\tilde{\alpha}(\tilde{g_1},g_2))=\alpha(\tilde{\pi_1}(\tilde{g_1},g_2))\}$$
is the pull-back of the arrows $\tilde{G_1} \rightarrow G_1$ and $G_2 \rightarrow G_1$ and $\tilde{i}_1: M_1 \rightarrow \tilde{H}$ is defined by $\tilde{i}_1(m_1)=(i_1(m_1),1)$. It can be shown that in fact, $1 \rightarrow M_1 \rightarrow \tilde{H} \rightarrow G_2 \rightarrow 1$ is an exact sequence. 

\medskip
Similarly, construct the following diagram using the homomorphism $\beta: M_1 \rightarrow M_2$:

\begin{equation}
 \label{eq:H2Gr-3}
     \xymatrix{ 
         1 \ar[r] & M_1 \ar[d]^{\beta} \ar[r]^{\tilde{i_1}} & \tilde{H} \ar[d]^{\tilde{\beta}} \ar[r]^{\tilde{\pi_1}}  &G_2 \ar[r] &1 \\
                  1 \ar[r]& M_2 \ar[r]^{i_1} &\tilde{G_2}\ar[ur]_{\pi_2} & & }
\end{equation}
where 
$$\tilde{G}_2=\frac{M_2 \oplus \tilde{H}}{\{(\beta(m_1),-\tilde{i}_1(m_1)) | m_1 \in M_1\}}$$
is the push-out of the arrows $M_1\rightarrow M_2$ and $M_1\rightarrow \tilde{H}$ 
and $\pi_2((m_2,\tilde{h}))=\tilde{\pi}_1 (\tilde{h})$. It follows that 
 $$1 \rightarrow M_2 \rightarrow \tilde{G}_2 \rightarrow G_2 \rightarrow 1$$ 
 is a short exact sequence. This construction defines a homomorphism between $H^2(G_1,M_1,\phi_1)$ and $H^2(G_2,M_2,\phi_2)$.  In fact this defines a covariant functor $H^2(\cdot )$ between $\mathbf{Tpl}_{Gr}$ and $\mathbf{Ab}$.

\subsection{Defining $H^3$ in the category $\mathbf{Tpl}_{Gr}$}\label{sec:H3Gr} It is well-known that each class in the cohomology group $H^3(G;M)$ is in correspondence with a short exact sequence of the following form \cite{Br},

\begin{equation*}
\xymatrix{ 0 \ar[r] &M \ar[r] &H \ar[r] &\tilde{H} \ar[r] &G \ar[r] &1 }
\end{equation*}

\hspace{-6.0mm} where $0 \rightarrow M \rightarrow H_1 \rightarrow \tilde{H}_1 \rightarrow G \rightarrow 1$ and $0 \rightarrow M \rightarrow H_2 \rightarrow \tilde{H}_2 \rightarrow G \rightarrow 1$ are equivalent if there exist $f_1$ and $f_2$ that make the following diagram commute:

\begin{equation}\label{equivH3}
  \xymatrix{ 
          0 \ar[r] &M \ar[d]^{\id} \ar[r] &H_1 \ar[d]^{f_1} \ar[r] &\tilde{H}_1 \ar[d]^{f_2} \ar[r] &G \ar[d]^{\id} \ar[r] &1 \\
          0 \ar[r] &M \ar[r] &H_2 \ar[r] &\tilde{H}_2 \ar[r] &G \ar[r] &1}
\end{equation}

Constructing this short exact sequence is equivalent to saying that there is a crossed module $f: H \rightarrow \tilde{H}$, that is, $f$ is a homomorphism of groups together with an action of $\tilde{H}$ over $H$ denoted by $\eta: \tilde{H} \rightarrow \Aut(H)$ such that for $h_2, h_3 \in H$ and $h_1 \in \tilde{H}$

\begin{enumerate}
\item[(i)] $f(h_2.h_1)=h_1.f(h_2).h_1^{-1}=f(h_2)^{h_1}$
\item[(ii)] $f(h_2).h_3= h_2. h_3.h_2^{-1}=h_3^{h_2}$.
\end{enumerate}

Notice that in such a short exact sequence we are only able to control two out of four terms, namely, the nilpotency class of $G$ and the action length of $M$. The challenge is to keep the rest of the groups in our category of triples so that we can apply the correspondence of Lazard. 

Our first approach, however, will be the following. Fix a surjective homomorphism of groups $\alpha: G_1 \rightarrow G_2$ and a $G_2$-module $M$. Then, we consider all the crossed modules $\mu: G \rightarrow G_1$ that have M as the kernel of $\mu$ and $\alpha$ as the cokernel. We say that two crossed moduless $\mu: G \rightarrow G_1$ and $\mu': G' \rightarrow G_1$ are equivalent if there exists an isomorphism $f: G \rightarrow G'$ such that it is compatible with the actions of $G_1$ over $G$ and $G'$, $\mu' \circ f= \mu$ and $f_M =id_M.$ That is, the following commutes:

\begin{equation}
  \xymatrix{ 
          0 \ar[r] &M \ar[d]^{\id} \ar[r] &G \ar[d]^{f} \ar[r]^{\mu} &G_1 \ar[d]^{\id} \ar[r] &G_2 \ar[d]^{\id} \ar[r] &1 \\
          0 \ar[r] &M \ar[r] &G' \ar[r]^{\mu'} &G_1 \ar[r] &G_2 \ar[r] &1.}
\end{equation}

Denote by $\text{CMG}(G_2,G_1;M)$ the group of equivalence classes of all the crossed modules $\mu: G \rightarrow G_1$ with kernel $M$ and cokernel $\alpha: G_1 \rightarrow G_2$. Then, there is a one to one correspondence between $\text{CMG}(G_2, G_1,M)$ and the relative cohomology group $H^3(G_2,G_1;M)$. Observe that in this case, we control three out of four terms in the short exact sequence and thus, it will be easier to establish the necessary conditions to keep the reminding group in the category of modules so that we can apply the Lazard correspondence as mentioned before.

\begin{remark}
The relative degree three cohomology comes from the cochain complex $C^*(G_2,G_1;M)$ that fits in the following short exact sequence:

$$0 \rightarrow C^*(G_2;M) \rightarrow  C^*(G_1;M) \rightarrow C^*(G_2,G_1;M) \rightarrow 0.$$
\end{remark}

In order to give group structure to these crossed modules one needs to define an addition of such short exact sequences. As in the previous section, this is given by the Baer sum. Given two short exact sequences 

$$ 0 \rightarrow M \rightarrow G \rightarrow G_1 \rightarrow G_2 \rightarrow 1$$ and $$ 0 \rightarrow M \rightarrow G' \rightarrow G_1 \rightarrow G_2 \rightarrow 1$$ the Baer sum is described as follows: 

\begin{equation}{\label{BaerH3}}
\xymatrix{ 
& & & &0 \ar[d] & \\
& & & &M \ar@{.>}[dll]_{f'_2} \ar[d]^{f_2} & \\
 & &B' \ar[d]^{\tilde{p}_1} \ar[r]_{\tilde{p}_2} &A' \ar[d]^{\tilde{\pi}_1} \ar[r]_{\tilde{\pi}_2} &G' \ar[d]^{k_2} & \\
  &  &B \ar[d]^{p_1} \ar[r]^{p_2} &A \ar[d]^{\pi_1} \ar[r]^{\pi_2} &G_1 \ar[d]^{h_2} \ar[r]  &1\\
 0 \ar[r] &M \ar@{.>}[ruu]^{f'_1} \ar[r]^{f_1} &G \ar[r]^{k_1} &G_1 \ar[d] \ar[r]^{h_1} &G_2 \ar[d] \ar[r] &1  \\
& & &1 &1 & }
\end{equation}
where $A$ is the pull-back of $h_1$ and $h_2$; $A'$ is the pull-back of $\pi_2$ and $k_2$; $B$ is the pull-back of $\pi_1$ and $k_1$ and $B'$ is the pull-back of $\tilde{\pi}_1$ and $p_2$ containing $6$-tuples $(g, g_1, g_2, g_3, g_4, g') \in G \times G_1 \times G_1 \times G_1 \times G_1 \times G'$ such that $p_2(g, g_1,g_2)=(g_1,g_2)=(g_3,g_4)=\tilde{\pi}_1(g_3,g_4,g').$

Take 
$$\tilde{B} =\dfrac{B'}{\{(f_1(m), 0, 0, 0, 0, 0)-(0, 0, 0, 0, 0, f_2(m)) | m \in M \}}$$
and then the Baer sum is defined naturally by the next short exact sequence:

\begin{equation*}
\xymatrix{ 0 \ar[r] &M \ar[r]^{\tilde{f}} & \tilde{B} \ar[r]^{\tilde{k}_1} &G_1 \ar[r]^{\tilde{h}_1} &G_2 \ar[r] &0 
}
\end{equation*}
where $\tilde{f}(m)=(f_1(m), 0 ,0, 0, 0, 0)=(0, 0, 0, 0, 0, f_2(m))$, $\tilde{k}_1(g_1, g_2, g_3, g_4, g_5, g_6)=g_1$ and $\tilde{h_1}=h_1.$

\subsection{The subcategory $\mathbf{Tpl}_{Gr}^{c,d}$}

Let $(G,M,\phi)$ be a triple in $\mathbf{Tpl}_{Gr}$. Let $c$ denote the nilpotency class of $G$ and let $d$ denote the length of the action as follows:

Let $M_G=\frac{M}{[M,G]}$ where 

$$[M,G]=\text{Span}\{m-gm \; | \; g \in G, m \in M\}$$ 
and $[M,{}_{i}G]=[[M, {}_{i-1} G],G]$ for all $i \in \mathbb{N}$. Notice that 

$$M \supseteq [M,G] \supseteq [M, {}_2G] \supseteq \cdots \supseteq [M, {}_i G] \supseteq \cdots $$ 

Then, the smallest $d \in \mathbb{N}$ for which $[M, {}_d G]=1$ is called the length of the action of $G$ on $M$.

We say that a triple $(G,M,\phi )$ is contained in the subcategory $\mathbf{Tpl}_{Gr}^{c,d}$ of  $\mathbf{Tpl}_{Gr}$ if the nilpotency class of $G$ is at most $c$ and the length of the action of $G$ on $M$ is at most $d$.

\section{Lie algebras, modules and cohomology of low dimension}
\label{s:Lie}

Given a Lie ring $L$, we recall that an $L$-module $M$ is a $\ZZ_p$-module together with a homomorphism of Lie algebras $\phi : L\to \End (M)$. We define the category $\mathbf{Tpl}_{Lie}$ which takes as objects the triples $(L,M,\psi )$ where $L$ is a Lie ring, $M$ is a $\ZZ_p$-module and $\psi\in  \Hom (L,\End (M))$. For any two objects $(L_1,M_1,\psi_1 )$ and $(L_2,M_2,\psi_2 )$ in $\mathbf{Tpl}_{Lie}$, a morphism from $(L_1,M_1,\psi_1 )$ to $(L_2,M_2,\psi_2 )$ is given by a pair $(\alpha ,\beta )$ where $\alpha \in \Hom (L_2,L_1)$, $\beta \in  \Hom (M_1,M_2)$ and for all $a_2 \in L_2$ and $m_1 \in M,$ the following holds:

$$\beta ((\psi_1 \circ \alpha(a_2))(m_1) )= (\psi_2(a_2))(\beta(m_1)).$$

\subsection{Definition of $H^0$ in the category $\textbf{Tpl}_{Lie}$.} As in the previous section, given a triple $(L,M,\psi)$ in $\Obj(\mathbf{Tpl}_{Lie})$, one can define the cohomology group $H^0(L,M,\psi)$ as the module of invariant elements of $M$ under the $L$-action. That is,

$$H^0(L,M,\psi)=H^0(L,M)=M^L=\{m \in M \; | \; \psi(a)\cdot m=0, \; \forall a \in L\}$$

\medskip
Furthermore, if one has $(\alpha,\beta )$ a morphism between two triples $(L_1,M_1,\psi_1 )$ and $(L_2,M_2,\psi_2 )$, then 
$$\beta : H^0(L_1,M_1)\subseteq M_1\longrightarrow H^0(L_2,M_2)\subseteq M_2$$ 
is a homomorphism of abelian groups. In fact, 
$H^0(\cdot )$ defines a covariant functor between the category $\mathbf{Tpl}_{Lie}$ 
and the category of abelian groups $\mathbf{Ab}$.

\subsection{Defining $H^1$ in the category $\mathbf{Tpl}_{Lie}$} Given a triple $(L,M,\psi) \in \mathbf{Tpl}_{Lie}$, each equivalence class in $H^1(L,M,\psi)$ is in correspondence with the equivalent exact sequence of $L$-modules $0 \rightarrow M \rightarrow \tilde{M} \rightarrow \ZZ_p \rightarrow 0$. That is,

$$H^1(L,M,\psi)=\frac{\{\text{Equivalent extensions of}\quad L\text{-modules}\}}{\{\text{Equivalent split extensions of}\quad L\text{-modules}\}}.$$

The additive structure of $H^1(L,M, \psi)$ is defined as in the diagram \ref{eq:H1Gr-1}. In fact one has that $H^1(\cdot )$ is a covariant functor between $\mathbf{Tpl}_{Lie}$ and $\mathbf{Ab}$.

\subsection{Definition of $H^2$ in the category $\mathbf{Tpl}_{Lie}$} Each equivalence class of $H^2(L,M)$ classifies the equivalent extensions of $L$ by $M$ by considering the extensions of the form $0 \rightarrow M \rightarrow \tilde{L} \rightarrow L \rightarrow 0$ \cite{ChE}.That is,

$$H^2(L,M)= \frac{\{\text{Equivalent extensions of L by M}\}}{\{\text{Equivalent split extensions of L by M}\}}.$$

As for the extensions of groups, the sum of extensions of Lie algebras is given by the Baer sum as in \ref{eq:H2Gr-1}. In fact, $H^2(\cdot)$ is a covariant functor between $\mathbf{Tpl}_{Lie}$ and $\mathbf{Ab}$.

\subsection{Defining $H^3$ in the category $\textbf{Tpl}_{Lie}$}.

As in Section \ref{sec:H3Gr}, each class in the cohomology group $H^3(L;M)$ is in correspondence with a short exact sequence of the following form

$$0 \rightarrow M \rightarrow \mathbf{g} \rightarrow \tilde{\mathbf{g}} \rightarrow L \rightarrow 0$$

\hspace{-4.5mm}under the equivalence class defined as in $\eqref{equivH3}$.
Constructing this short exact sequence is equivalent to saying that there is a crossed module $f: \mathbf{g} \rightarrow \tilde{\mathbf{g}}$, that is, $f$ is a homomorphism of Lie algebras together with an action of $\tilde{\mathbf{g}}$ over $\mathbf{g}$ denoted by $\eta: \tilde{\mathbf{g}} \rightarrow \text{Der}(\mathbf{g})$ such that for $g_1,g_2 \in \mathbf{g}$ and $\tilde{g} \in \tilde{\mathbf{g}}$

\begin{enumerate}
\item[(i)] $f(\eta(\tilde{g}).g_1)=[\tilde{g}, f(g_1)]$
\item[(ii)] $\eta(f(g_1)).g_2= [g_1, g_2]$.
\end{enumerate}

Notice that in such a short exact sequence we are only able to control two out of four terms, namely, the nilpotency class of $L$ and the length action on $M$. The challenge is to keep the rest of the Lie algebras in our category of triples so that we can apply the correspondence of Lazard. 

Our first approach, however, will be as in the case for the groups. Fix a surjective homomorphism of Lie algebras $\alpha: L_1 \rightarrow L_2$ and a $L_2$-module $M$. Then, we consider all the crossed modules $\mu: L \rightarrow L_1$ that have M as the kernel of $\mu$ and $\alpha$ as the cokernel. We say that two crossed homomorphisms $\mu: L \rightarrow L_1$ and $\mu': L' \rightarrow L_1$ are equivalent if there exists an isomorphism $f: L \rightarrow L'$ such that it is compatible with the actions of $L_1$ over $L$ and $L'$, $\mu' \circ f= \mu$ and $f_L =id_L.$ That is, the following diagram commutes:

\begin{equation}
  \xymatrix{ 
          0 \ar[r] &M \ar[d]^{\id} \ar[r] &L \ar[d]^{f} \ar[r]^{\mu} &L_1 \ar[d]^{\id} \ar[r] &L_2 \ar[d]^{\id} \ar[r] &0 \\
          0 \ar[r] &M \ar[r] &L' \ar[r]^{\mu'} &L_1 \ar[r] &L_2 \ar[r] &0.}
\end{equation}

Denote by $\text{CML}(L_2,L_1;M)$ the group of the equivalence classes of all the crossed modules $\mu: L \rightarrow L_1$ with kernel $M$ and cokernel $\alpha: L_1 \rightarrow L_2$. Then, there is a one to one correspondence between $\text{CML}(L_2, L_1,M)$ and the relative cohomology group $H^3(L_2,L_1;M)$ as it is proven in the Appendix A of \cite{KL}. Observe that in this case, we control three out of four terms in the short exact sequence and thus, it will be easier to establish the necessary conditions to keep the reminding Lie algebra in the category of modules so that we can apply the Lazard correspondence as mentioned before.

\begin{remark}
As in the case of the cohomology of groups, the relative cohomology goup $H^3(L_2,L_1;M)$ comes from the cochain complex $C^*(L_2,L_1;M)$ that fits in the following exact sequence:

$$0 \rightarrow C^*(L_2,M) \rightarrow C^*(L_1,M) \rightarrow C^*(L_2,L_1;M) \rightarrow 0.$$
\end{remark}

The Baer sum of such two extensions is defined as in \eqref{BaerH3}.

\subsection{The subcategory $\mathbf{Tpl}_{Lie}^{c,d}$} Let $(L,M,\psi)$ be a triple in $\mathbf{Tpl}_{Lie}$. Let $c$ denote the nilpotency class of $L$ and let $d$ denote the length action defined as follows:

\medskip
Let $M_L=\frac{M}{[M,L]}$ where 

$$[M,L]=\text{Span}\{am \; | \; a \in L, m \in M\}$$
and $[M,{}_{i}L]=[[M, {}_{i-1} L],L]$. Notice that 

$$M \supseteq [M,L] \supseteq [M, {}_2L] \supseteq \cdots \supseteq [M, {}_i L] \supseteq \cdots$$

Then, the smallest $d \in \mathbb{N}$ for which $[M, {}_d L]=0$ is called the length action of $L$ on $M$.

\medskip
We say that a triple $(L,M,\psi )$ is contained in the subcategory $\mathbf{Tpl}_{Lie}^{c,d}$ of  $\mathbf{Tpl}_{Lie}$ if the nilpotency class of $L$ is at most $c$ and the length action of $L$ on $M$ is at most $d$.

\section{$\mathbf{Exp}$ and $\mathbf{Log}$ for triples} We define the exponential and logarithm maps for the objects $(G,M,\phi) \in \mathbf{Tpl}_{Gr}^{c,d}$ and $(L,M,\psi) \in \mathbf{Tpl}_{Lie}^{c,d}$ for $c,d <p$ as follows:

\medskip
$$\mathbf{Exp}: \; \mathbf{Tpl}_{Lie}^{c,d} \; \longrightarrow \; \mathbf{Tpl}_{Gr}^{c,d}$$
$$(L,M, \psi)\rightarrow(\mathbf{exp}(L),M, \text{exp}(\psi))$$ and 
$$\mathbf{Log}: \; \mathbf{Tpl}_{Gr}^{c,d} \; \longrightarrow \; \mathbf{Tpl}_{Lie}^{c,d}$$ 
$$(G,M, \phi)\rightarrow(\mathbf{log}(G),M, \text{log}(\phi))$$
 where $\displaystyle \mathop{\text{exp}(\psi)=\sum_{k=0}^{p-1}\frac{\psi^k}{k!}}$ and $\displaystyle \mathop{\text{log}(\phi)=\sum_{k=1}^{p-1}(-1)^{k+1}\frac{(\phi-\id)^k}{k}}.$

\bigskip
Similarly, one can also define $\mathbf{Exp}$ and $\mathbf{Log}$ for morphisms $(\alpha, \gamma) \in \Mor(\mathbf{Tpl}^{c,d}_{Gr})$ and $(\alpha, \beta) \in \Mor(\mathbf{Tpl}^{c,d}_{Lie})$ by $\mathbf{Exp}(\alpha,\gamma)=(\mathbf{exp}(\alpha),\gamma)$ and $\mathbf{Log}(\beta, \gamma)=(\mathbf{log}(\beta), \gamma).$ In the following lemmas we will show that these maps are in fact well-defined.

\begin{lemma} \label{4} Let $(G,M,\phi) \in \mathbf{Tpl}_{Gr}^{c,d}$ and $c,d<p$, then $(\mathbf{log}(G), M, \text{log}(\phi)) \in \mathbf{Tpl}_{Lie}^{c,d}$. Moreover, the following statements hold:
\begin{enumerate}
\item $(\phi- \id)(g)^d=0$ for all $g \in G$.
\item $[M, {}_i \textbf{log}(G)]\subseteq [M, {}_i G]$ for all $i \geq 1$.
\end{enumerate}
\end{lemma}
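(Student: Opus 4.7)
The proof plan begins by translating the definition of the lower central series of the module action into associative-algebra language inside $\End(M)$. Since $m-gm=-(\phi(g)-\id)(m)$, unpacking the definition inductively gives
\[
[M,{}_i G] \;=\; \Span\bigl\{(\phi(g_1)-\id)(\phi(g_2)-\id)\cdots(\phi(g_i)-\id)(m)\mid g_j\in G,\; m\in M\bigr\}.
\]
Thus the hypothesis $[M,{}_dG]=0$ is equivalent to the vanishing of every $d$-fold product $(\phi(g_1)-\id)\cdots(\phi(g_d)-\id)$ as an endomorphism of $M$. Specialising $g_1=\cdots=g_d=g$ immediately yields statement $(1)$; more importantly, the associative subalgebra $I\subseteq\End(M)$ generated by $\{\phi(g)-\id:g\in G\}$ satisfies $I^d=0$.

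For $(2)$ I induct on $i\geq 1$. Writing
\[
\log(\phi)(g) \;=\; \sum_{k=1}^{p-1}\frac{(-1)^{k+1}}{k}(\phi(g)-\id)^k,
\]
each summand with $k\geq 1$ sends $[M,{}_{i-1}G]$ into $[M,{}_{i-1+k}G]\subseteq [M,{}_iG]$ by the previous formula, so combining with the inductive hypothesis $[M,{}_{i-1}\mathbf{log}(G)]\subseteq [M,{}_{i-1}G]$ gives the inclusion at level $i$ (the base case $i=1$ being $[M,{}_0\mathbf{log}(G)]=M=[M,{}_0 G]$).

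For the main assertion, $(2)$ with $i=d$ forces $[M,{}_d\mathbf{log}(G)]\subseteq [M,{}_dG]=0$, so the length of action of $\mathbf{log}(G)$ on $M$ is at most $d$, while the nilpotency class of $\mathbf{log}(G)$ is at most $c$ by Lazard. What remains is to verify that $\log(\phi):\mathbf{log}(G)\to\End(M)$ is a Lie-algebra homomorphism. The idea is to work inside the associative subalgebra $\ZZ_p\cdot\id+I\subseteq\End(M)$: since $I^d=0$ and $d<p$, the truncated series $\exp$ and $\log$ give mutually inverse bijections between $I$ and $1+I$, realising $(1+I,\cdot)$ and $(I,+,[\cdot,\cdot])$ as a Lazard pair inside $\End(M)$. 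The image $\phi(G)\subseteq 1+I$ is a quotient of $G$, hence a nilpotent pro-$p$ group of class $\leq c<p$, so functoriality of $\mathbf{log}$ applied to the surjection $\phi:G\twoheadrightarrow\phi(G)$ produces a Lie-algebra homomorphism $\mathbf{log}(G)\to \mathbf{log}(\phi(G))$. Composing with the canonical embedding $\mathbf{log}(\phi(G))\hookrightarrow I\subseteq\End(M)$ yields precisely the set map $g\mapsto \log(\phi(g))=\log(\phi)(g)$, so $\log(\phi)$ is a Lie-algebra homomorphism.

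The main obstacle is making rigorous the compatibility between the abstract Lazard Lie bracket on $\mathbf{log}(\phi(G))$ and the restriction of the commutator bracket on $I$ inherited from $\End(M)$. This compatibility follows from the identity $\log(AB)=H(\log A,\log B)$ (with $H$ the Baker--Campbell--Hausdorff series) valid for $A,B\in 1+I$, since $I^d=0$ makes the expansion terminate and $d<p$ makes all denominators invertible in $\ZZ_p$; both the Lazard bracket and the commutator bracket on $I$ are extracted from this same formula and therefore coincide. Combining all of the above, $(\mathbf{log}(G),M,\log(\phi))\in\mathbf{Tpl}_{Lie}^{c,d}$ as claimed.
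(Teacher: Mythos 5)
Your proposal is correct and follows essentially the same route as the paper: statement (1) via the nilpotency of $\phi(g)-\id$ along the filtration $[M,{}_iG]$, statement (2) by induction using the power-series expansion of $\log(\phi)(g)$, and the Lie-homomorphism property of $\log(\phi)$ via the (inverse) Baker--Campbell--Hausdorff identities evaluated inside $\End(M)$. The only real difference is one of packaging: where the paper directly computes $\log(\phi(h_1(g_1,g_2)))=\log(\exp(\log\phi(g_1)+\log\phi(g_2)))$, you route the same identity through functoriality of $\mathbf{log}$ applied to $\phi\colon G\twoheadrightarrow\phi(G)\subseteq 1+I$ and make explicit the compatibility of the abstract Lazard operations with the truncated associative-algebra $\exp$/$\log$ in $\ZZ_p\cdot\id+I$ --- a point the paper's computation uses implicitly.
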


\begin{proof} 
We will start proving (1). Take $(G,M,\phi )\in \mathbf{Tpl}_{Gr}^{c,d}$. For all $g\in G$ and $m_1, m_2\in M$, we have
\begin{equation*}
\begin{split}
(\phi (g)-\id)(m_1+m_2)& =\phi (g)(m_1+m_2)-(m_1+m_2) \\ 
& = \phi (g)(m_1)-m_1 + \phi (g)(m_2)-m_2 \\
& =(\phi (g)-\id)(m_1) + (\phi (g) -\id)(m_2).
\end{split}
\end{equation*}
Therefore $(\phi (g)-id)\in \End (M)$. Furthermore, for any $G$-invariant submodule $U$ of $M$ one has by definition $(\phi (g) -id) (U)\subseteq [U,G]$. In particular $(\phi (g) -id)^d (M)\subseteq [M,_dG]=0$ and therefore $(\phi (g) -id)^d=0$.

We will continue by proving that $(\mathbf{log} (G),M,\log (\phi))\in \mathbf{Tpl}_{Lie}$. We would like to see that the map $\tilde{\phi}: G \rightarrow \Aut (M)$ given by $\tilde{\phi}(g)=\text{ log}(\phi(g))$ is also a homomorphism of Lie algebras so that we conclude that $M$ is also a $\mathbf{log}(G)$-module. We claim that for $g_1, g_2 \in G$, we have $\tilde{\phi}(g_1+g_2)= \tilde{\phi}(g_1) +\tilde{\phi}(g_2)$ and $\tilde{\phi}([g_1,g_2])=[\tilde{\phi}(g_1), \tilde{\phi}(g_2)]$. Indeed

\begin{equation*}
\begin{split} 
\tilde{\phi}(g_1+g_2)&= \log (\phi(g_1+g_2))=\log (\phi(h_1(g_1,g_2)))=
\log (h_1(\phi(g_1), \phi(g_2))) \\
& \quad =\log (\exp (\log (\phi(g_1))+\log (\phi(g_2)))) = \log (\phi(g_1)) + \log (\phi(g_2)) \\
& \quad = \tilde{\phi}(g_1) + \tilde{\phi}(g_2)
\end{split}
\end{equation*}

and

\begin{equation*}
\begin{split}
\tilde{\phi}([g_1,g_2])&= \text{log} (\phi([g_1,g_2]))= \text{log}(\phi(h_2(g_1,g_2)))= \text{log}(h_2(\phi(g_1),\phi(g_2))) \\
& \quad  =\text{log}(\exp ([\log (\phi(g_1)), \log (\phi(g_2))]))= [\tilde{\phi}(g_1),\tilde{\phi}(g_2)]
\end{split}
\end{equation*}
where $h_1(\cdot, \cdot)$ and $h_2(\cdot, \cdot)$ are the inverse Backer-Campbell- Hausdorff formulae. Therefore, $\tilde{\phi}$ is a homomorphism of Lie algebras and thus, $M$ is a $\mathbf{log}(G)$-module and $(\mathbf{log} (G),M,\log (\phi))\in \mathbf{Tpl}_{Lie}$.

For proving (2) notice that for a $G$-invariant submodule $U$ of $M$ we have that $U$ is also a $\mathbf {log} (G)$-module. Furthermore,  the action of $\mathbf{log} (G)$ in $U/[U,G]$ is trivial. Therefore $[U,\mathbf{log} (G)]\subseteq [U,G]$. An induction on $i$ now shows that $[M,_i\mathbf{log} (G)]\subseteq [M,_iG]$. 

In particular, we have  $(\mathbf{log}(G), M, \text{log}(\phi)) \in \mathbf{Tpl}_{Lie}^{c,d}.$
\end{proof}

\begin{lemma} \label{5} Let $(L,M, \psi) \in \mathbf{Tpl}_{Lie}^{c,d}$ and $c,d<p,$ then $(\mathbf{exp}(L),M,\text{exp}(\psi)) \in \mathbf{Tpl}_{Gr}^{c,d}.$ Moreover, the following conditions hold:
\begin{enumerate}
\item $\psi(a)^d=0$ for all $a \in L$.
\item $[M, {}_i \textbf{exp}(L)] \subseteq [M, {}_i L]$ for all $i \geq 1.$
\end{enumerate}
\end{lemma}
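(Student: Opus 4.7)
The plan is to mirror the proof of Lemma \ref{4}, with the roles of groups and Lie algebras reversed.

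First I would establish (1). Since $\psi$ is a Lie algebra homomorphism, $\psi(a) \in \End(M)$ for each $a \in L$, and by the very definition of the action-length filtration one has $\psi(a)(U) \subseteq [U,L]$ for any $L$-invariant submodule $U$ of $M$. Iterating this inclusion gives
$$\psi(a)^d(M) \subseteq [M, {}_d L] = 0,$$
so $\psi(a)^d = 0$. Because $d < p$, this implies that the truncated series $\text{exp}(\psi(a)) = \sum_{k=0}^{p-1} \psi(a)^k / k!$ coincides with the full exponential series, belongs to $\End(M)$, and is invertible with inverse $\text{exp}(-\psi(a))$.

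Next I would verify that $\text{exp}(\psi)\colon \mathbf{exp}(L) \to \Aut(M)$ is a group homomorphism. Since $\psi$ respects both addition and the Lie bracket, it commutes with the Baker--Campbell--Hausdorff polynomial, i.e. $\psi(H(a,b)) = H(\psi(a), \psi(b))$. As the multiplication on $\mathbf{exp}(L)$ is given by $a \cdot b = H(a,b)$, one gets
$$\text{exp}(\psi)(a \cdot b) = \text{exp}(\psi(H(a,b))) = \text{exp}(H(\psi(a),\psi(b))) = \text{exp}(\psi(a)) \cdot \text{exp}(\psi(b)),$$
where the last equality is the defining identity of $H$ in $\End(M)$, valid because $\psi(a)$ and $\psi(b)$ are nilpotent of index at most $d < p$. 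Thus $M$ becomes an $\mathbf{exp}(L)$-module, and Lazard's theorem supplies that $\mathbf{exp}(L)$ is nilpotent of class equal to that of $L$, hence at most $c$.

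For (2) I would copy the argument from Lemma \ref{4}: if $U \subseteq M$ is $L$-invariant, then $U$ is also $\mathbf{exp}(L)$-invariant, and since $\psi(a)$ acts trivially on $U/[U,L]$ so does $\text{exp}(\psi(a))$. Hence $[U, \mathbf{exp}(L)] \subseteq [U,L]$, and induction on $i$ yields $[M, {}_i \mathbf{exp}(L)] \subseteq [M, {}_i L]$; in particular the action length of $\mathbf{exp}(L)$ on $M$ is at most $d$, completing the verification that $(\mathbf{exp}(L),M,\text{exp}(\psi)) \in \mathbf{Tpl}_{Gr}^{c,d}$. The only real obstacle is the bookkeeping around truncation, but once $\psi(a)^d = 0$ is known with $d < p$, the truncated series agrees with the formal exponential on all the nilpotent endomorphisms we need, so the classical identity $\text{exp}(x)\text{exp}(y) = \text{exp}(H(x,y))$ transfers verbatim to $\End(M)$ and no $p$-torsion issues intervene.
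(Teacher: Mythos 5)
Your proposal is correct and follows essentially the same route as the paper's proof: part (1) via $\psi(a)(U)\subseteq[U,L]$ for $L$-invariant $U$, the homomorphism property of $\exp(\psi)$ via compatibility with the Baker--Campbell--Hausdorff formula, and part (2) via the trivial action on $U/[U,L]$ plus induction on $i$. Your extra remarks (invertibility of $\exp(\psi(a))$ so that it lands in $\Aut(M)$, and citing Lazard for the nilpotency class of $\mathbf{exp}(L)$) only make explicit what the paper leaves implicit.
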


\begin{proof}
We will start proving (1). Take $(L,M,\psi )\in \mathbf{Tpl}_{Lie}^{c,d}$. We have that $\psi (a)\in \End (M)$. Furthermore, for any $L$-invariant submodule $U$ of $M$ one has $\psi (a) (U)\subseteq [U,L]$. In particular $(\psi (a) )^d (M)\subseteq [M,_dL]=0$ and therefore $(\psi (a))^d=0$.

We will continue by proving that $(\mathbf{exp} (L),M,\exp (\psi))\in \mathbf{Tpl}_{Lie}$. We would like to see that the map $\tilde{\psi}: L \rightarrow \End (M)$ given by $\tilde{\psi}(a)=\text{ exp}(\psi(a))$ is also a homomorphism of Lie algebras so that we conclude that $M$ is also a $\mathbf{exp}(L)$-module. We claim that for $a,b \in L$, $\tilde{\psi}(ab)= \tilde{\psi}(a)\tilde{\psi}(b)$ hold. Indeed

\begin{equation*}
 \begin{split}
\tilde{\psi}(ab) &=\text{exp}(\psi(ab))=\text{exp}(\psi(H(a,b)))=\text{exp}(H(\psi(a),\psi(b)))= \\
& \quad = \text{exp}(\text{log}(\text{exp}(\psi(a))\text{exp}(\psi(b))))=\text{exp}(\psi(a))\text{exp}(\psi(b))=\tilde{\psi}(a)\tilde{\psi}(b).
 \end{split}
\end{equation*}
where $H(\cdot,\cdot)$ denotes the Backer-Campbell-Hausdorff formula. Therefore, $\tilde{\psi}$ is a homomorphism of groups and thus, $M$ is a $\mathbf{exp}(L)$-module.

For proving (2) notice that for an $L$-invariant submodule $U$ of $M$ we have that $U$ is also an $\mathbf {exp} (L)$-module. Furthermore,  the action of $\mathbf{exp} (L)$ in $U/[U,L]$ is trivial. Therefore $[U,\mathbf{exp} (L)]\subseteq [U,L]$. An induction on $i$ now shows that $[M,_i\mathbf{exp} (L)]\subseteq [M,_iL]$. 

In particular, we have  $(\mathbf{exp}(L), M, \text{exp}(\psi)) \in \mathbf{Tpl}_{Gr}^{c,d}.$
\end{proof}

\medskip
\begin{theorem} \label{6} Let $c$ and $d$ be smaller than $p$. Then, 

$$\mathbf{Exp}: \mathbf{Tpl}_{Lie}^{c,d} \rightarrow  \mathbf{Tpl}_{Gr}^{c,d} \quad  \text{and} \quad \mathbf{Log}: \mathbf{Tpl}_{Gr}^{c,d} \rightarrow \mathbf{Tpl}_{Lie}^{c,d}$$ 
are well-defined functors. Moreover, these functors are isomorphisms of categories one inverse of the other. 
\end{theorem}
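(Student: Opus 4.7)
The plan is to leverage Lemmas \ref{4} and \ref{5}, which have already established that $\mathbf{Log}$ and $\mathbf{Exp}$ send objects of $\mathbf{Tpl}_{Gr}^{c,d}$ and $\mathbf{Tpl}_{Lie}^{c,d}$ into the correct categories. What remains is the morphism-level behaviour, functoriality, and the mutual-inverse property.

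For the morphism check, I would fix a morphism $(\alpha,\beta)\in\Mor\bigl((L_1,M_1,\psi_1),(L_2,M_2,\psi_2)\bigr)$ in $\mathbf{Tpl}_{Lie}^{c,d}$. Since $\alpha\colon L_2\to L_1$ is a Lie algebra homomorphism and the underlying set map of $\mathbf{exp}$ is the identity, the classical Lazard correspondence immediately turns $\alpha$ into a group homomorphism $\mathbf{exp}(\alpha)\colon \mathbf{exp}(L_2)\to\mathbf{exp}(L_1)$. The non-trivial task is to verify the compatibility
\[
\beta\circ\bigl(\exp(\psi_1)(\mathbf{exp}(\alpha)(g_2))\bigr)=\bigl(\exp(\psi_2)(g_2)\bigr)\circ\beta
\]
for every $g_2\in\mathbf{exp}(L_2)$. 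The hypothesis $\beta\circ\psi_1(\alpha(g_2))=\psi_2(g_2)\circ\beta$ propagates by induction on $k$ to $\beta\circ\psi_1(\alpha(g_2))^k=\psi_2(g_2)^k\circ\beta$ for all $k\geq 0$. By Lemma \ref{5}(1) we have $\psi_1(\alpha(g_2))^d=0=\psi_2(g_2)^d$, so the series defining $\exp$ truncate to finite sums of length at most $d<p$; summing those identities weighted by $1/k!$ yields the required compatibility. The verification for $\mathbf{Log}$ is dual, using $\log(1+x)$ in place of $\exp(x)$ and Lemma \ref{4}(1) to truncate the series.

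Functoriality is then routine: both functors leave the module component unchanged, so one only needs to observe that $\mathbf{exp}$ and $\mathbf{log}$ respect identities and composition on the first coordinate, which is part of the classical Lazard correspondence. For the inverse property, I would show $\mathbf{Exp}\circ\mathbf{Log}=\id$ and $\mathbf{Log}\circ\mathbf{Exp}=\id$ on both objects and morphisms. On the Lie/group coordinate this reduces to $\mathbf{exp}\circ\mathbf{log}=\id$ and $\mathbf{log}\circ\mathbf{exp}=\id$ from Lazard, and the module coordinate is untouched. For the endomorphism component, we must verify $\log(\exp(\psi(a)))=\psi(a)$ and $\exp(\log(\phi(g)))=\phi(g)$ pointwise: these are formal power-series identities valid for any nilpotent $x\in\End(M)$ of nilpotency index $<p$, so they apply to $x=\psi(a)$ and $x=\phi(g)-\id$ by Lemmas \ref{5}(1) and \ref{4}(1) respectively.

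The most delicate step in this plan is the morphism-level compatibility, since it is the only place where the module action must be handled directly. The argument rests on two facts: the action length being strictly less than $p$ truncates both $\exp$ and $\log$ to finite sums whose denominators $k!$ with $k<p$ are $p$-adic units, and the $\ZZ_p$-linearity of $\beta$ lets the intertwining relation survive under these finite summations. Every other step is either a direct appeal to the classical Lazard correspondence or the tautological observation that the module map $\beta$ is preserved untouched by $\mathbf{Exp}$ and $\mathbf{Log}$.
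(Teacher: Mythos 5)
Your proposal is correct and follows essentially the same route as the paper: object-level well-definedness via Lemmas \ref{4} and \ref{5}, followed by the computation that $\mathbf{Log}\circ\mathbf{Exp}$ and $\mathbf{Exp}\circ\mathbf{Log}$ are identities, using that the truncated $\exp$ and $\log$ series are mutually inverse on nilpotent endomorphisms of index less than $p$. The only difference is that you spell out the morphism-level compatibility check (the induction $\beta\circ\psi_1(\alpha(g_2))^k=\psi_2(g_2)^k\circ\beta$ and its summation), which the paper subsumes without detail under ``by previous lemmas the functors are well-defined''; your version is the more complete one.
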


\begin{proof} By previous lemmas $\mathbf{Exp}$ and $\mathbf{Log}$ are well-defined functors. Moreover, 

\begin{equation*}
\begin{split}
(\mathbf{Log} \;\circ \;\mathbf{Exp})(L, M, \psi)&= \mathbf{Log}(\mathbf{exp}(L), M, \text{exp}(\psi)) \\
&=(\mathbf{log}(\mathbf{exp}(L)), M, \text{log}(\text{exp}(\psi))) \\
&=(L,M,\psi),
\end{split}
\end{equation*}

and

\begin{equation*}
\begin{split}
(\mathbf{Exp} \;\circ \;\mathbf{Log})(G,M,\phi)=&\mathbf{Exp}(\mathbf{log}(G), M, \text{log}(\phi)) \\
&=(\mathbf{exp}(\mathbf{log}(G)), M, \text{exp}(\text{log}(\phi))) \\
&=(G, M, \phi),
\end{split}
\end{equation*}
which completes the proof of the theorem.
\end{proof}

\begin{corollary} \label{7} Let $c,d<p$ and $(L,M,\psi) \in \mathbf{Tpl}_{Lie}^{c,d}.$ Then, $[M,{}_i \mathbf{exp}(L)]=[M, {}_i L]$ for all $i \geq 1$.
\end{corollary}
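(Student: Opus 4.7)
The plan is to combine Lemma \ref{5} with Lemma \ref{4} applied to the image triple, using that $\mathbf{Exp}$ and $\mathbf{Log}$ are mutually inverse (Theorem \ref{6}).

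First, Lemma \ref{5}.(2) already gives the inclusion $[M,{}_i \mathbf{exp}(L)] \subseteq [M,{}_i L]$ for every $i\geq 1$. So the content of the corollary is the reverse inclusion.

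For this, I would start with the triple $(L,M,\psi)\in \mathbf{Tpl}_{Lie}^{c,d}$ and pass to its exponential triple $(\mathbf{exp}(L),M,\exp(\psi))$, which by Theorem \ref{6} lies in $\mathbf{Tpl}_{Gr}^{c,d}$. Applying Lemma \ref{4}.(2) to this group triple yields
\[
[M,{}_i \mathbf{log}(\mathbf{exp}(L))] \subseteq [M,{}_i \mathbf{exp}(L)].
\]
But $\mathbf{log}\circ \mathbf{exp}=\id$ on $Lie_p$ by Lazard's theorem (and so $\mathbf{Log}\circ \mathbf{Exp}=\id$ on $\mathbf{Tpl}_{Lie}^{c,d}$ by Theorem \ref{6}), hence $\mathbf{log}(\mathbf{exp}(L))=L$ as Lie algebras (with the same underlying $\ZZ_p$-module and thus the same commutator spans relevant for the lower central series of the action). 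This gives $[M,{}_i L] \subseteq [M,{}_i \mathbf{exp}(L)]$, completing the equality.

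There is no real obstacle: the only thing to check carefully is that Lemma \ref{4}.(2) genuinely applies to the triple $(\mathbf{exp}(L),M,\exp(\psi))$, which is guaranteed by Lemma \ref{5} (it produces an object of $\mathbf{Tpl}_{Gr}^{c,d}$), and that the identification $\mathbf{log}(\mathbf{exp}(L))=L$ respects the action $\psi$, which follows from $\mathbf{Log}(\mathbf{Exp}(L,M,\psi))=(L,M,\psi)$ as in the proof of Theorem \ref{6}. Thus the corollary follows in one short paragraph.
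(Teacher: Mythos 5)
Your argument is correct and is exactly the one the paper intends: the paper's proof simply says the corollary is a direct consequence of Lemmas \ref{4} and \ref{5}, and your two inclusions (Lemma \ref{5}.(2) for one direction, Lemma \ref{4}.(2) applied to $(\mathbf{exp}(L),M,\exp(\psi))$ together with $\mathbf{Log}\circ\mathbf{Exp}=\id$ for the other) are precisely the details being elided. Nothing further is needed.
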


\begin{proof} This is just a direct consequence of the Lemma \ref{4} and Lemma \ref{5}.
\end{proof}

\section{Transporting cohomology}

In this section we would like to show the relation between the cohomology groups of a finite pro-$p$ group over a module $M$ and the cohomology groups of the corresponding Lie algebra over the same module $M$ for dimensions $0$, $1$, $2$ and $3$. In fact, we will show that after taking the $\mathbf{Exp}$ and $\mathbf{Log}$ functors defined in the previous section such cohomology functors are naturally equivalent.

\subsection{$H^0$ and $H_0$ for triples}

We will show that both $H^0$ and $H_0$ are the same for a Lie algebra and its corresponding finite pro-$p$ group.

\begin{theorem}Let $c,d <p$ and let
$$H^0_{Lie}: \mathbf{Tpl}_{Lie}^{c,d}\longrightarrow \mathbf{Ab}$$
and
$$H^0_{Gr}: \mathbf{Tpl}_{Gr}^{c,d}\longrightarrow \mathbf{Ab},$$
be the cohomology functors. Then 
$$H_{Lie}^0=H_{Gr}^0 \circ \mathbf{Exp}.$$
\end{theorem}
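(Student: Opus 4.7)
The plan is to establish the stronger statement that the two functors agree on the nose: for each triple $(L,M,\psi)\in\mathbf{Tpl}_{Lie}^{c,d}$, the submodule $H^0_{Lie}(L,M,\psi)\subseteq M$ coincides, as a subset of $M$, with $H^0_{Gr}(\mathbf{Exp}(L,M,\psi))\subseteq M$; since morphisms on cohomology are defined in both cases as the restriction of the $\beta$-component, naturality will then be automatic.

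First I would unpack the two invariance conditions. On the group side, $m\in H^0_{Gr}(\mathbf{exp}(L),M,\exp(\psi))$ means $\exp(\psi(a))(m)=m$ for every $a\in L$, equivalently $(\exp(\psi(a))-\id)(m)=0$. On the Lie side, $m\in H^0_{Lie}(L,M,\psi)$ means $\psi(a)(m)=0$ for every $a\in L$. By Lemma \ref{5} we have $\psi(a)^d=0$ for all $a\in L$, so $\psi(a)$ is nilpotent with $\psi(a)^p=0$, and both truncated series $\exp(\psi(a))$ and $\log(\id+(\exp(\psi(a))-\id))$ are genuine polynomials in $\psi(a)$ satisfying the formal identity $\log\circ\exp=\id$ in the truncated associative $\mathbb Z_p$-algebra $A(\psi(a))_p$.

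For the forward inclusion, if $\psi(a)(m)=0$ then $\psi(a)^k(m)=0$ for every $k\ge 1$, and so
\[
\exp(\psi(a))(m)=m+\sum_{k=1}^{p-1}\frac{\psi(a)^k(m)}{k!}=m.
\]
For the reverse inclusion, suppose $(\exp(\psi(a))-\id)(m)=0$. Then $(\exp(\psi(a))-\id)^k(m)=0$ for every $k\ge 1$, hence
\[
\psi(a)(m)=\log(\exp(\psi(a)))(m)=\sum_{k=1}^{p-1}(-1)^{k+1}\frac{(\exp(\psi(a))-\id)^k(m)}{k}=0.
\]
This gives the equality of subsets $H^0_{Lie}(L,M,\psi)=H^0_{Gr}(\mathbf{Exp}(L,M,\psi))$.

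Finally, for a morphism $(\alpha,\beta)\colon (L_1,M_1,\psi_1)\to(L_2,M_2,\psi_2)$ in $\mathbf{Tpl}_{Lie}^{c,d}$, both $H^0_{Lie}(\alpha,\beta)$ and $H^0_{Gr}(\mathbf{Exp}(\alpha,\beta))=H^0_{Gr}(\mathbf{exp}(\alpha),\beta)$ are, by construction, the restriction of the same map $\beta\colon M_1\to M_2$ to the respective invariant submodules, which we have just shown agree. Thus the two functors coincide on objects and on morphisms, yielding $H^0_{Lie}=H^0_{Gr}\circ\mathbf{Exp}$. There is no real obstacle here; the only subtle point is ensuring that the truncated $\exp$ and $\log$ are honest mutual inverses when applied to $\psi(a)$, which is guaranteed by the nilpotency bound $d<p$ coming from Lemma \ref{5}.
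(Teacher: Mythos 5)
Your proof is correct and follows essentially the same route as the paper: both directions are the two subclaims in the paper's proof (the forward inclusion is its Subclaim 2 verbatim, and your reverse inclusion is its Subclaim 1 specialized via $\log\circ\exp=\id$, which is exactly how the paper closes the argument using that $\mathbf{Log}$ and $\mathbf{Exp}$ are mutually inverse). The remark that naturality is automatic because both induced maps are restrictions of the same $\beta$ matches the paper's treatment as well.
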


\begin{proof} 
Let $(L,M,\psi)$ be in $\mathbf{Tpl}_{Lie}^{c,d}$ and put $(G,M,\phi)=\mathbf{Exp}(L,M,\psi) \in \mathbf{Tpl}_{Gr}^{c,d}$. Recall that 

$$H_{Lie}^0(\mathbf{Log}(G,M,\phi))=\{m \in M \; | \; log(\phi)(a)\cdot m=0, \quad \text{for all} \quad a \in \mathbf{log}(G)\}.$$
Similarly,

$$H_{Gr}^0(\mathbf{Exp}(L,M,\psi))=\{m \in M \; | \; exp(\phi)(g)\cdot m=m, \quad \text{for all} \quad g \in \mathbf{exp}(L)\}.$$
We want to see that $H_{Lie}^0(L,M,\psi)=H_{Gr}^0(G,M,\phi)$.

\medskip
\noindent $\mathbf{Subclaim \; 1:}$ $H_{Gr}^0(G,M,\phi) \subset H_{Lie}^0(\mathbf{Log}(G,M,\phi)).$ 

\medskip

\begin{proof}[Subproof] Let $m \in H_{Gr}^0(G,M,\phi)$, that is, $\phi(g)\cdot m=m$ for all $g \in G$. Then, 

$$\displaystyle \mathop{\log(\phi)(g)\cdot m=\sum_{k=1}^{p-1} \frac{(\phi-id)(g)^k}{k}\cdot m}=0$$
for all $g\in G$. Hence, $m \in H_{Lie}^0(\mathbf{Log}(G,M,\phi))$.
\end{proof}

\medskip
\noindent $\mathbf{Subclaim \; 2:}$ $H_{Lie}^0(L,M,\psi)\subset H_{Gr}^0(\mathbf{Exp}(L,M,\psi)).$

\begin{proof}[Subproof] Let $m \in H_{Lie}^0(L,M,\psi)$, that is, $\psi(a)\cdot m=0$ for all $a \in L$. Then, 

$$\displaystyle \mathop{\exp(\psi)(a)\cdot m=\sum_{k=0}^{p-1}\frac{\psi(a)^k}{k!}\cdot m}=m$$
for all $a \in L$. Hence, $m \in H_{Gr}^0(\mathbf{Exp}(L,M,\psi)).$
\end{proof}

Since $\mathbf{Exp}$ and $\mathbf{Log}$ are isomorphisms of categories, one inverse of the other, and by the first subclaim, we have

$$H_{Gr}^0(\mathbf{Exp}(L,M,\psi)) \subset H_{Lie}^0(\mathbf{Log}(\mathbf{Exp}(L,M,\psi)))=H_{Lie}^0(L,M,\psi).$$

Now the equality of the functors is clear by Subclaim 2.
\end{proof}

\begin{theorem} Let $c,d<p$ and let
$$H_0^{Lie}: \mathbf{Tpl}_{Lie}^{c,d}\longrightarrow \mathbf{Ab}$$
and
$$H_0^{Gr}: \mathbf{Tpl}_{Gr}^{c,d}\longrightarrow \mathbf{Ab},$$
be the homology functors. Then 

$$H^{Lie}_0=H^{Gr}_0 \circ \mathbf{Exp}.$$
\end{theorem}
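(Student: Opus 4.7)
The plan is to reduce the statement directly to Corollary \ref{7} applied at index $i=1$. Using the standard convention that $H_0$ is the module of coinvariants, one has
$$H_0^{Lie}(L,M,\psi) = M/[M,L], \qquad H_0^{Gr}(G,M,\phi) = M/[M,G],$$
where the bracket submodules are exactly the ones introduced in Sections \ref{s:groups} and \ref{s:Lie}. Evaluating the composite on an object gives
$$H_0^{Gr}\bigl(\mathbf{Exp}(L,M,\psi)\bigr) = M/[M,\mathbf{exp}(L)],$$
and Corollary \ref{7} with $i=1$ tells me $[M,\mathbf{exp}(L)] = [M,L]$. Hence the two abelian groups are not merely isomorphic but literally the same quotient of $M$, so $H^{Lie}_0(L,M,\psi) = H^{Gr}_0\circ\mathbf{Exp}(L,M,\psi)$ on objects.

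For the action on morphisms, I would take $(\alpha,\beta):(L_1,M_1,\psi_1)\to(L_2,M_2,\psi_2)$ in $\mathbf{Tpl}_{Lie}^{c,d}$, note that $\mathbf{Exp}(\alpha,\beta) = (\mathbf{exp}(\alpha),\beta)$ carries exactly the same $\beta:M_1\to M_2$, and observe that the induced map on either flavour of coinvariants is simply the map $\bar\beta$ on quotients. Since the bracket submodules agree at the source and at the target (again by Corollary \ref{7} with $i=1$), the descended maps $\bar\beta$ coincide, so $H_0^{Lie}$ and $H_0^{Gr}\circ\mathbf{Exp}$ agree on morphisms as well.

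I expect no genuine obstacle here; the statement is essentially a corollary of Corollary \ref{7}. The only point worth double-checking is that $\beta$ descends compatibly to both quotients, which is immediate from the fact that $\mathbf{Exp}$ leaves the underlying $\ZZ_p$-module $M$ and the morphism component $\beta$ untouched (as already exploited in the proof of Theorem \ref{6}). So the argument reduces to: write the definitions, cite Corollary \ref{7} for the bracket equality, and note that both the objects and the induced morphisms are literally the same data on the two sides.
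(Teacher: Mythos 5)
Your argument is correct and is essentially identical to the paper's own proof: both identify $H_0$ with the coinvariants $M/[M,G]$ and $M/[M,L]$ and then invoke Corollary \ref{7} to get $[M,\mathbf{exp}(L)]=[M,L]$, so the two quotients coincide on the nose. Your additional remark on morphisms (that $\beta$ descends identically to both quotients) is a small completeness bonus the paper leaves implicit.
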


\begin{proof} 
Let $(L, M, \psi)$ be in $\mathbf{Tpl}_{Lie}^{c,d}$ and put $(G,M,\phi)=\mathbf{Exp}(L,M,\psi)$. Since $ \mathbf{Exp}$ is an isomorphism of categories between $\mathbf{Tpl}_{Lie}^{c,d}$ and $\mathbf{Tpl}_{Gr}^{c,d}$, it is enough to prove that $H^{Lie}_0(L,M,\psi)=H^{Gr}_0(\mathbf{Exp}(L,M,\psi))$.

By definition $H_0^{Gr}= M_G$ and $H_0^{Lie}=M_L$. By Corollary \ref{7} we know that $[M,G]=[M,L]$. Then, as $M_G=\frac{M}{[M,G]}$, $M_L=\frac{M}{[M,L]}$, the equality $H^{Lie}_0(L,M,\psi)=H^{Gr}_0(\mathbf{Exp}(L,M,\psi))$ holds from the fact that $[M,G]=[M,L]$ by Corollary \ref{7}. 
\end{proof}

\subsection{$\mathbf{Exp}$ and $\mathbf{Log}$ for exact sequences of modules and $H^1$} 

We will show that the classes in $H^1$ remain unchanged after applying the $\mathbf{Exp}$ and $\mathbf{Log}$ functors.

\begin{proposition} \label{10} Let $c,d<p$ and $(L,M_1,\phi_1), (L,M_2,\phi_2),(L,M_3, \phi_3) \in \mathbf{Tpl}_{Lie}^{c,d}.$ Then,

\begin{displaymath}
         \xymatrix{ 0 \ar[r] & M_1 \ar[r]^{\alpha} &M_2 \ar[r]^{\beta} &M_3 \ar[r] &0 }
\end{displaymath}
is an exact sequence of $L$-modules if and only if
\begin{displaymath}
         \xymatrix{ 0 \ar[r] & M_1 \ar[r]^{\alpha} &M_2 \ar[r]^{\beta} &M_3 \ar[r] &0 }
\end{displaymath}
is an exact sequence of $\mathbf{exp}(L)$-modules.
\end{proposition}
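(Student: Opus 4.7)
The plan is to observe that exactness of a sequence of modules is fundamentally a statement about the underlying $\ZZ_p$-module structure together with compatibility of the connecting maps with the action. Since the underlying $\ZZ_p$-module on each $M_i$ is literally the same under $\mathbf{Exp}$ and $\mathbf{Log}$, the exactness of $0 \to M_1 \to M_2 \to M_3 \to 0$ as a sequence of abelian groups (or $\ZZ_p$-modules) is an invariant of the data and is the same in both settings. What genuinely needs checking is that the $\ZZ_p$-linear maps $\alpha$ and $\beta$ are $L$-equivariant if and only if they are $\mathbf{exp}(L)$-equivariant with respect to the action $\exp(\psi_i)$.

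For the forward implication, I would fix $a \in L$ and a $\ZZ_p$-linear map, say $\alpha : M_1 \to M_2$, satisfying $\alpha \circ \psi_1(a) = \psi_2(a) \circ \alpha$. A straightforward induction on $k$ gives $\alpha \circ \psi_1(a)^k = \psi_2(a)^k \circ \alpha$ for every $k \geq 0$. Summing the truncated exponential series (finite by Lemma \ref{5}(1), since $\psi_i(a)^d = 0$ and $d<p$) yields
\begin{equation*}
  \alpha \circ \exp(\psi_1(a)) \;=\; \exp(\psi_2(a)) \circ \alpha,
\end{equation*}
i.e.\ $\alpha \circ \phi_1(g) = \phi_2(g) \circ \alpha$ for $g = \mathbf{exp}(a)$. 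Since every element of $\mathbf{exp}(L)$ arises as $\mathbf{exp}(a)$ for some $a \in L$ (by Theorem \ref{6}, $\mathbf{exp}$ is a bijection of underlying sets), this gives $\mathbf{exp}(L)$-equivariance of $\alpha$. The same argument applies to $\beta$.

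For the reverse implication, I would run the analogous argument using the truncated logarithm: if $\alpha$ commutes with $\phi_1(g)$ for every $g \in G = \mathbf{exp}(L)$, then it commutes with $(\phi_1(g) - \id)^k$ for each $k$, and hence with $\log(\phi_1(g)) = \psi_1(a)$ where $a = \mathbf{log}(g)$. As $\mathbf{log}$ is a bijection onto $L$, this gives $L$-equivariance. Combining the two directions, the class of $\ZZ_p$-linear maps making the diagram commute is identical whether the modules are regarded as $L$-modules or $G$-modules, and therefore so is the notion of a short exact sequence of such modules.

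The only subtle point is ensuring that the sums defining $\exp$ and $\log$ really do terminate and live inside the appropriate polynomial subalgebra of $\End(M)$ so that the induction-plus-linearity argument passes through; this is precisely guaranteed by the bounds $c,d<p$ together with Lemmas \ref{4} and \ref{5}. No other obstacle is expected, as the argument is purely formal once the action polynomial identities are in place.
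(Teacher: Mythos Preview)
Your proposal is correct and follows essentially the same approach as the paper: both arguments reduce the claim to the fact that a $\ZZ_p$-linear map is $L$-equivariant if and only if it is $\mathbf{exp}(L)$-equivariant, with exactness at the level of underlying $\ZZ_p$-modules being automatic. The paper simply invokes Theorem~\ref{6} (the functoriality of $\mathbf{Exp}$ and $\mathbf{Log}$) for this equivalence, whereas you unpack it explicitly via the truncated power series; your version is more detailed but not a different route.
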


\begin{proof} Notice that the action of $L$ over the $M_i$ modules commutes with $\alpha$ and $\beta$ and thus, by theorem \ref{6} the same will happen with the action of $\mathbf{exp}(L)$ over such modules. This fact proves the proposition.
\end{proof}

\begin{theorem} {\label {H1}} Let $c<p$, $d<p-1$ and let
$$H^1_{Lie}: \mathbf{Tpl}_{Lie}^{c,d}\longrightarrow \mathbf{Ab}$$
and
$$H^1_{Gr}: \mathbf{Tpl}_{Gr}^{c,d}\longrightarrow \mathbf{Ab},$$
be the cohomology functors. Then $H_{Lie}^1$ and $H_{Gr}^1 \circ \mathbf{Exp}$ are naturally equivalent and the natural transformation that provides the equivalence of functors is the one given by Proposition \ref{10}.
\end{theorem}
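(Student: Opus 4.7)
The plan is to define the natural transformation $\Phi$ at a triple $(L, M, \psi) \in \mathbf{Tpl}_{Lie}^{c,d}$ by transporting extensions of modules through Theorem \ref{6}. Given a class in $H^1_{Lie}(L, M, \psi)$ represented by $0 \rightarrow M \rightarrow \tilde{M} \rightarrow \ZZ_p \rightarrow 0$, I would first observe that the length of the $L$-action on $\tilde{M}$ is at most $d+1$: since $L$ acts trivially on the quotient $\ZZ_p$, one has $[\tilde{M}, L] \subseteq M$ and an easy induction yields $[\tilde{M}, {}_{i+1} L] \subseteq [M, {}_i L]$, so that $[\tilde{M}, {}_{d+1} L] = 0$. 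The hypothesis $d < p-1$ is equivalent to $d+1 < p$, which is exactly what is required to place the extended triple $(L, \tilde{M}, \tilde{\psi})$ in the range where Lemma \ref{5} (and hence Theorem \ref{6}) applies.

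With this bound in hand, I would apply $\mathbf{Exp}$ to $\tilde{\psi}$ to obtain an $\mathbf{exp}(L)$-action on $\tilde{M}$. By Proposition \ref{10}, the same short exact sequence is an exact sequence of $\mathbf{exp}(L)$-modules and so represents a class in $H^1_{Gr}(\mathbf{Exp}(L, M, \psi))$; set $\Phi_{(L,M,\psi)}([\tilde{M}]) := [\tilde{M}]$. This assignment is independent of the representative because an equivalence of $L$-extensions is an isomorphism of $L$-modules restricting to the identity on $M$ and descending to the identity on $\ZZ_p$, and by Proposition \ref{10} applied to morphisms it remains an equivalence after passing to $\mathbf{exp}(L)$-modules. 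The inverse natural transformation is constructed by the same recipe using $\mathbf{Log}$ and the inclusion $[\tilde{M}, {}_i \mathbf{log}(G)] \subseteq [\tilde{M}, {}_i G]$ from Lemma \ref{4}.(2), and mutual inverseness follows directly from Theorem \ref{6} applied to each pair $(L, \tilde{M}, \tilde{\psi})$.

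It remains to check that $\Phi$ is additive and natural in $(L, M, \psi)$. The Baer sum of two $L$-extensions, as described in diagram \eqref{eq:H1Gr-1}, is built purely from kernels, pull-backs and quotients of the underlying abelian groups with the module action inherited pointwise. Since $\mathbf{Exp}$ preserves the underlying abelian group of a module and Proposition \ref{10} identifies the two notions of short exactness, the Baer sum in $\mathbf{Tpl}_{Lie}$ matches, after applying $\mathbf{Exp}$, the Baer sum in $\mathbf{Tpl}_{Gr}$. Naturality in a morphism $(\alpha, \beta)$ reduces in the same way to compatibility with the push-out of diagram \eqref{eq:H1Gr-2}, which is again a purely abelian-group construction and so commutes with $\mathbf{Exp}$ and $\mathbf{Log}$.

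The main obstacle is therefore not a deep new computation but the bookkeeping around the jump in the length of action from $d$ on $M$ to at most $d+1$ on the middle term $\tilde{M}$. Controlling this jump is exactly what forces the hypothesis to tighten from $d < p$ (as in Theorem A) to $d < p-1$, and it is the only essential content beyond a direct combination of Theorem \ref{6} with Proposition \ref{10}.
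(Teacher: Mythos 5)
Your proposal is correct and follows essentially the same route as the paper's proof: you bound the action length on the middle term $\tilde{M}$ by $d+1$ (which is where the hypothesis $d<p-1$ enters), invoke Proposition \ref{10} to transport the extensions, and handle the Baer sum and naturality via diagrams \eqref{eq:H1Gr-1} and \eqref{eq:H1Gr-2}. Your write-up is in fact somewhat more detailed than the paper's (e.g.\ the explicit check of well-definedness on equivalence classes), but there is no substantive difference in the argument.
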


\begin{proof}
Take $(L,M,\psi)\in   \mathbf{Tpl}_{Lie}^{c,d}$, and
 let $0 \rightarrow M \rightarrow \tilde{M} \rightarrow \mathbb{Z}_p \rightarrow 0$ be
an element in $H_{Lie}^1(L,M,\psi)$. This means that $\frac{\tilde{M}}{M} \cong \mathbf{Z}_p$ and thus, $[\tilde{M}, L] \leq M$. Then, $[\tilde{M}, _{d+1} L]=0.$ That is, $(L,\tilde{M},\tilde{\psi}) \in \mathbf{Tpl}_{Lie}^{c,d+1}$ and Lazard correspondence holds as $d < p-1$. We have a similar argument if we start with a triple in $\mathbf{Tpl}_{Gr}^{c,d}$.

Notice that the Baer sum of an extension of $G$-modules is the Baer sum of the corresponding $L$-module extensions which follows from the diagram \eqref{eq:H1Gr-1}. This shows that  $H_{Lie}^1(L,M,\psi)\cong (H_{Gr}^1 \circ \mathbf{Exp})(L,M,\psi)$. 

Finally, given a morphism $(\alpha ,\beta) \in \Mor(\textbf{Tpl}^{c,d}_{Lie})$, the morphism $H_{Lie}^1(\alpha ,\beta)$ is transformed into the morphism  $H_{Gr}^1 \circ \mathbf{Exp} (\alpha ,\beta)$ by the diagram \eqref{eq:H1Gr-2}.
\end{proof}

\subsection{Exp and Log for group extensions and $H^2$.} 
Let $(G,M,\phi) \in \mathbf{Tpl}_{Gr}^{c,d}$ and consider
$$1 \rightarrow M \rightarrow \tilde{G} \rightarrow G \rightarrow 1.$$
Then the nilpotency class of $\tilde{G}$ is at most $c+d$. Similarly if  $(L,M,\psi) \in \mathbf{Tpl}_{Lie}^{c,d}$ and 
$$0 \rightarrow M \rightarrow \tilde{L} \rightarrow L \rightarrow 0$$
is an extension of Lie algebras, the nilpotency class of $\tilde{L}$ is at most $c+d$.
The following result is now clear from Lazard correspondence.

\begin{proposition}{\label{pro:H2}} Let $c+d <p$ and $(L,M,\psi) \in \mathbf{Tpl}_{Lie}^{c,d}$. Then,

$$0 \rightarrow M \rightarrow \tilde{L} \rightarrow L \rightarrow 0$$
is an extension of the Lie algebra $L$ if and only if 
$$1 \rightarrow M \rightarrow \mathbf{exp}(\tilde{L}) \rightarrow \mathbf{exp}(L) \rightarrow 1$$ is an extension of the group $\mathbf{exp}(L)$. 
\end{proposition}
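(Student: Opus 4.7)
The statement is essentially a functoriality assertion: once we know the middle object $\tilde{L}$ (respectively $\tilde{G}$) satisfies the Lazard hypothesis, the whole short exact sequence transports. So the plan has three parts: bound the nilpotency class of the middle term, invoke the functorial properties of $\mathbf{exp}$ and $\mathbf{log}$ listed in Lazard's theorem, and verify that the module structure survives.

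First I would bound the class of $\tilde{L}$. Since $M = \ker(\tilde{L}\to L)$ is an abelian ideal and $\tilde{L}/M\cong L$ has nilpotency class at most $c$, the lower central series satisfies $\gamma_{c+1}(\tilde{L})\subseteq M$. The adjoint action of $\tilde{L}$ on the abelian ideal $M$ factors through $L$ and is exactly the module action $\psi$, whose length is at most $d$. Iterating $d$ more commutators gives $\gamma_{c+d+1}(\tilde{L})\subseteq [M,{}_{d}L]=0$, so $\tilde{L}$ has class at most $c+d<p$. The analogous calculation (using Corollary \ref{7} to identify the action length on the group side) shows that a group extension in the statement has middle term of class at most $c+d<p$ as well. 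Hence both $\mathbf{exp}(\tilde{L})$ and $\mathbf{log}(\tilde{G})$ are meaningful under Lazard's theorem.

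Next I would apply functoriality. The maps $M\hookrightarrow\tilde{L}$ and $\tilde{L}\twoheadrightarrow L$ are, respectively, an inclusion of an ideal and a surjection of Lie algebras; parts (c) and (d) of Lazard's theorem then turn them into an inclusion of a normal subgroup $\mathbf{exp}(M)\trianglelefteq\mathbf{exp}(\tilde{L})$ and a surjection $\mathbf{exp}(\tilde{L})\twoheadrightarrow\mathbf{exp}(L)$, and the kernel of the latter is precisely $\mathbf{exp}(M)$. Because $M$ has trivial Lie bracket, the BCH formula restricted to $M$ reduces to $H(a,b)=a+b$, so $\mathbf{exp}(M)$ coincides with $(M,+)$ as an abelian group and the inclusion into $\mathbf{exp}(\tilde{L})$ is the same set-theoretic map as the Lie-algebra inclusion. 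This yields the group extension $1\to M\to\mathbf{exp}(\tilde{L})\to\mathbf{exp}(L)\to 1$. The converse direction is symmetric, replacing $\mathbf{exp}$ by $\mathbf{log}$ throughout and using that $\mathbf{log}$ sends the abelian normal subgroup $M$ to itself.

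Finally, I would check that the resulting extension really gives back the original module structure $\phi=\mathbf{Exp}(\psi)$, i.e.\ that conjugation by $\mathbf{exp}(\tilde{\ell})$ on $\mathbf{exp}(M)=M$ equals $\exp(\psi(\ell))$ for $\ell$ the image of $\tilde{\ell}$ in $L$. Inside $\tilde{L}$ the adjoint $\mathrm{ad}_{\tilde{\ell}}$ restricted to $M$ is $\psi(\ell)$, and the formal identity $\mathrm{Ad}_{\exp(x)}=\exp(\mathrm{ad}_x)$ passes to the truncated setting because $c+d<p$ guarantees that all relevant iterated brackets lie in $\gamma_{p}(\tilde{L})=0$. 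I expect this last step to be the main technical point: one must justify $\mathrm{Ad}\circ\exp=\exp\circ\mathrm{ad}$ inside $\tilde{L}$ from first principles (or by citing the identity in the truncated enveloping algebra $A(X)_p$), since unlike the preceding two parts it is not immediate from the statement of Lazard's theorem but rather from how $\mathbf{exp}$ is constructed via BCH. Once established, it closes the loop with $\mathbf{Exp}$ of Section~5 and proves the biconditional.
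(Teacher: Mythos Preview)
Your proposal is correct and follows the same approach as the paper: bound the nilpotency class of the middle term by $c+d<p$ via $\gamma_{c+1}(\tilde{L})\subseteq M$ and $[M,{}_dL]=0$, then invoke Lazard. The paper in fact gives only the nilpotency bound and declares the rest ``clear from Lazard correspondence,'' so your explicit verification that the abelian ideal $M$ passes to $(M,+)$ under $\mathbf{exp}$ and that the induced conjugation action coincides with $\exp(\psi)$ is additional detail the paper leaves implicit.
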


Now, we are ready to show the next result.

\begin{theorem} \label{H2} Let $c+d<p$ and let
$$H^2_{Lie}: \mathbf{Tpl}_{Lie}^{c,d}\longrightarrow \mathbf{Ab}$$
and
$$H^2_{Gr}: \mathbf{Tpl}_{Gr}^{c,d}\longrightarrow \mathbf{Ab},$$
be the cohomology functors. Then $H_{Lie}^2$ and $H_{Gr}^2 \circ \mathbf{Exp}$ are naturally equivalent and the natural transformation that provides the equivalence of functors is given in Proposition \ref{pro:H2}.
\end{theorem}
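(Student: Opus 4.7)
The plan is to define the natural transformation $\eta_{(L,M,\psi)}: H^2_{Lie}(L,M,\psi) \to H^2_{Gr}(\mathbf{Exp}(L,M,\psi))$ pointwise by the recipe of Proposition \ref{pro:H2}: send the class of a Lie extension $0 \to M \to \tilde L \to L \to 0$ to the class of the group extension $1 \to M \to \mathbf{exp}(\tilde L) \to \mathbf{exp}(L) \to 1$. The key point, already observed in the paragraph preceding Proposition \ref{pro:H2}, is that the bound $c+d<p$ forces the middle term $\tilde L$ (of nilpotency class at most $c+d$) to lie in the Lazard range $Lie_p$, so $\mathbf{exp}(\tilde L)$ is defined; Proposition \ref{pro:H2} then gives a bijection at the level of extensions. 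Well-definedness on equivalence classes and bijectivity of $\eta_{(L,M,\psi)}$ follow from item (f) of Lazard's theorem applied to the middle objects of two equivalent extensions (all lying in $Lie_p$), with the inverse direction supplied by $\mathbf{log}$.

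Next I would verify that $\eta_{(L,M,\psi)}$ is a group homomorphism with respect to the Baer sum. The Baer sum of two Lie extensions is built from the pullback $\tilde H \subseteq \tilde L_1 \times \tilde L_2$ along the projections to $L$, followed by the quotient by an anti-diagonal copy of $M$; on the group side this is the construction of diagram \eqref{eq:H2Gr-1}. Since $\mathbf{exp}$ leaves the underlying set unchanged and, by items (c) and (d) of Lazard's theorem, identifies sub-Lie-algebras with subgroups and ideals with normal subgroups, the two constructions correspond step-by-step once one checks that $\tilde H$ and the Baer sum itself remain of nilpotency class at most $c+d$. This holds because each of them fits naturally in an extension of $L$ by $M$.

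Finally I would establish naturality for a morphism $(\alpha,\beta) \in \Mor(\mathbf{Tpl}_{Lie}^{c,d})$: the induced map $H^2_{Lie}(\alpha,\beta)$ is built via the pullback of \eqref{eq:H2Gr-2} and the pushout of \eqref{eq:H2Gr-3}, both of which produce Lie algebras fitting into extensions of a Lie algebra of class at most $c$ by a module of action length at most $d$, hence of class at most $c+d<p$. The same subalgebra/ideal preservation lets $\mathbf{exp}$ turn them into the corresponding group-theoretic pullback and pushout, making the naturality square commute. The main obstacle is precisely this bookkeeping: every intermediate Lie algebra appearing in the Baer sum and in the induced morphism construction must be verified to remain within the Lazard range, and the subobject/quotient steps must be confirmed to transport faithfully through $\mathbf{exp}$ rather than only up to isomorphism. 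The bound $c+d<p$ is exactly what makes this possible.
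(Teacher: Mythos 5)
Your proposal is correct and follows essentially the same route as the paper's (much terser) proof: transport extensions via Proposition \ref{pro:H2}, check compatibility with the Baer sum through diagram \eqref{eq:H2Gr-1}, and verify naturality via the pullback and pushout diagrams \eqref{eq:H2Gr-2} and \eqref{eq:H2Gr-3}. The only added value in your write-up is the explicit bookkeeping that every intermediate object stays in the Lazard range (note only that the pullback $\tilde H$ is an extension of $L$ by $M\oplus M$ rather than by $M$, which still has action length at most $d$, so the class bound $c+d$ is unaffected).
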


\begin{proof}
Notice that by Proposition \ref{pro:H2} extensions and split extensions are preserved by the  $\textbf{exp}$-$\textbf{log}$ functors. By diagram \eqref{eq:H2Gr-1}, the Baer sum is also preserved. Finally one can see that by diagrams  \eqref{eq:H2Gr-2} and \eqref{eq:H2Gr-3} the morphisms are transformed.
\end{proof}

\subsection{Exp and Log for group extensions and $H^3$.} 

Let $(G,M,\phi)$ and $(G_2, M, \phi_2)$ be in $\mathbf{Tpl}_{Gr}^{c,d}$ with nilpotency classes $\tilde{c} < c$ and $c_2 < c$ respectively and length action $d$. Consider $0 \rightarrow M \rightarrow G_1 \rightarrow G_2 \rightarrow G \rightarrow 1.$ Then, the nilpotency class of $G_1$ is at most $c_2 +d$ and thus, $(G_1,M,\phi_1) \in \mathbf{Tpl}_{Gr}^{c_2+d,d}$.

Similarly, if $(L,M,\psi),(L_2,M,\psi_2) \in \mathbf{Tpl}^{c,d}_{Lie}$ with nilpotency classes $\tilde{c} < c$ and $c_2 < c$, respectively, the length action is $d$ and if we consider $1 \rightarrow M \rightarrow L_1 \rightarrow L_2 \rightarrow L \rightarrow 1$, then $(L_1, M \psi_1) \in \mathbf{Tpl}^{c_2+d,d}_{Lie}.$ 

\begin{proposition}{\label{cross3}} Let $c+d <p$, $(G,M,\phi), (G_2,M,\phi_2)\in \mathbf{Tpl}_{Gr}^{c,d}$. Then,

$$0 \rightarrow M \rightarrow G_1 \rightarrow G_2 \rightarrow G \rightarrow 1$$
is a crossed module for $\nu: G_2 \rightarrow G$ and a $G$-module $M$ if and only if

$$0 \rightarrow M \rightarrow \textbf{log}(G_1) \rightarrow \textbf{log}(G_2) \rightarrow \textbf{log}(G) \rightarrow 0$$
is a crossed module for $\textbf{log}(\nu): \textbf{log}(G_2) \rightarrow \textbf{log}(G)$ and the $\textbf{log}(G)$-module $M$. In particular 
$$H_{RGr}^3(G,G_2;M)\cong H_{RLie}^3(\textbf{log} (G),\textbf{log}(G_2);M).$$
\end{proposition}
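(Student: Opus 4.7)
The plan is to transport each ingredient of the four-term sequence through the Lazard functor $\textbf{log}$ and then to identify the equivalence classes, Baer sums, and relative cohomology groups on the two sides.

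First I would check that all four objects appearing in the sequence lie in Lazard range. By the discussion preceding the proposition, $G_1$ has nilpotency class at most $c_2+d\le c+d<p$, so the Lazard correspondence applies to $G$, $G_2$ and $G_1$ (and trivially to the abelian $M$). Properties (c) and (d) of the Lazard correspondence (subgroups and normal subgroups correspond to sub-Lie algebras and ideals) then imply that $4$-term exactness of the group sequence is equivalent to $4$-term exactness of its image under $\textbf{log}$: the image, the kernel and the cokernel of any morphism among these groups are sent by $\textbf{log}$ to their Lie-theoretic counterparts.

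Second I would transport the $G_2$-action on $G_1$ and verify the crossed-module axioms. By property (f) one has $\Aut(G_1)=\Aut(\textbf{log}(G_1))$, so the action $\eta:G_2\to\Aut(G_1)$ becomes a map on the same underlying set. The key point is that applying $\textbf{log}$ to $\eta$ yields a Lie homomorphism landing in $\text{Der}(\textbf{log}(G_1))$, which I would justify by using that internal conjugation in any Lazard-range group $\tilde H$ is intertwined by $\textbf{log}$ with the Lie bracket on $\textbf{log}(\tilde H)$, i.e.\ the infinitesimal form of the Baker-Campbell-Hausdorff identification. Group crossed module axioms (i) $\mu(\eta(g_2).g_1)=g_2\mu(g_1)g_2^{-1}$ and (ii) $\mu(g_1).g_1'=g_1 g_1' g_1^{-1}$ then translate term by term, via $\textbf{log}$, into the Lie axioms $\textbf{log}(\mu)(\textbf{log}(\eta)(a)\cdot x)=[a,\textbf{log}(\mu)(x)]$ and $\textbf{log}(\eta)(\textbf{log}(\mu)(x)).y=[x,y]$. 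The converse direction is identical, with $\textbf{exp}$ in place of $\textbf{log}$.

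Third, I would note that the equivalence relation on crossed modules from diagram \eqref{equivH3} and the Baer sum described in \eqref{BaerH3} are built only from pullbacks, pushouts and quotients of $\ZZ_p$-modules and of groups/Lie algebras of nilpotency class below $p$. By Theorem \ref{6} together with the argument used in Proposition \ref{pro:H2}, all such constructions are preserved by $\textbf{exp}$/$\textbf{log}$. This yields a bijection $\text{CMG}(G,G_2;M)\cong \text{CML}(\textbf{log}(G),\textbf{log}(G_2);M)$ compatible with the additive structure, hence an isomorphism of abelian groups. Combined with the identification of these sets of crossed modules with the respective relative cohomology groups, one obtains the stated isomorphism. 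The main obstacle will be the second step: verifying rigorously that the $G_2$-action on $G_1$ becomes a Lie action by derivations on $\textbf{log}(G_1)$, and that the conjugation on the right-hand side of axiom (i) matches the Lie bracket on the nose rather than up to higher-order BCH terms. This is where the hypothesis $c+d<p$ is essential, guaranteeing that all BCH series are truncated inside a Lazard-range group where $\textbf{exp}$ and $\textbf{log}$ are mutually inverse.
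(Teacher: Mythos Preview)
Your plan is correct and follows essentially the same route as the paper. The paper's proof concentrates on bounding the nilpotency class of $G_1$ (re-deriving inside the proof what you cite from the discussion just before the proposition: $\gamma_{c+1}(G_1)\subseteq\ker k=M$ and hence $\gamma_{c+d+1}(G_1)\subseteq[M,{}_dG_2]=1$), then simply asserts that the Lazard correspondence transports the crossed-module structure and invokes diagram \eqref{BaerH3} for the Baer sum; your steps 2 and 3 spell out more carefully what the paper leaves implicit, but the strategy is the same.
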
 

\begin{proof} Let 
\begin{displaymath}
         \xymatrix{ 0 \ar[r] &M \ar[r]^{f} &G_1 \ar[r]^{k} &G_2 \ar[r]^{h} &G \ar[r] &1
}
\end{displaymath}
be a crossed module with $(G,M,\phi)$ and $(G_2,M,\psi)$  in $\mathbf{Tpl}_{Gr}^{c,d}$.

By definition, $G_2$ acts on $G_1$ and we can consider $M$ to be contained in $G_1$. Then, the length action of $G_1$ over $M$ is at most $d$ as 

$$[M, {}_i G_1] \leq [M,{}_i G_2] \; \text{for all} \; i\geq 1.$$

It is enough to show that the nilpotency class of $G_1$ is at most $p$. Indeed, $\frac{G_1}{\ker(k)}\simeq \im(k) \subset G_2$ and thus $\gamma_{c+1}(G_1) \subset \ker(k)= M$. Hence $\gamma_{c+d+1}(G_1) \leq [M, {}_d G_2]=1$. By hypothesis, $c+d <p$ and thus, the Lazard correspondence holds, that is, %

$$0 \rightarrow M \rightarrow \textbf{log}(G_1) \rightarrow \textbf{log}(G_2) \rightarrow \textbf{log}(G) \rightarrow 0$$
is a crossed module for $\mathbf{log}(\nu): \mathbf{log}(G_2) \rightarrow \mathbf{log}(G)$.

Similarly, one can give the same arguments starting with an extension of Lie algebras. The last statement of the proposition follows from diagram $\eqref{BaerH3}$.
\end{proof}

\section{Applications}

Let $G$ be a finite $p$-group and $L$ be a Lie algebra. We have the following results based on Theorem  \ref{H2}.

\begin{theorem}Let $\textbf{Gr}_{p-2}$ be the category of a finite $p$-group of nilpotency class smaller than $p-1$ and $\textbf{Lie}_{p-2}$ the category of finite and nilpotent $\ZZ_p$-Lie algebras of nilpotency class smaller than $p-1$. Denote by 
\begin{eqnarray*}
  \M_{Gr} & : & \textbf{Gr}_{p-2}\longrightarrow \textbf{Ab} \ \ \ \text{and} \\
  \M_{Lie} & :  & \textbf{Lie}_{p-2}\longrightarrow \textbf{Ab} 
\end{eqnarray*}
the group and Lie algebra Schur Multiplier functors, respectively. Then 
$\M_{Gr}\circ \textbf{exp}$ and $\M_{Lie}$ are naturally equivalent. In particular, for $L\in \textbf{Lie}_{p-2}$ one has $\M_{Gr}(\textbf{exp}(L))\cong \M_{Lie} (L)$.
\end{theorem}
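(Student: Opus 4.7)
The plan is to realize both Schur multipliers as values of the triple-cohomology functors $H^2_{Gr}$ and $H^2_{Lie}$ from Section 4 applied to the coefficient module $C_{p^\infty}$ equipped with the trivial action, and then deduce the statement directly from Theorem~B(3). By the identification mentioned in the introduction, for $G \in \textbf{Gr}_{p-2}$ and $L \in \textbf{Lie}_{p-2}$ one has
\[
\M_{Gr}(G) = H^2_{Gr}(G, C_{p^\infty}, \phi_{\mathrm{triv}}) \quad \text{and} \quad \M_{Lie}(L) = H^2_{Lie}(L, C_{p^\infty}, \psi_{\mathrm{triv}}),
\]
where $\phi_{\mathrm{triv}}$ and $\psi_{\mathrm{triv}}$ denote the trivial actions on $C_{p^\infty}$, viewed as a $\ZZ_p$-module through its natural $p$-torsion structure.

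First I would verify the hypotheses of Theorem~B(3). Since the action is trivial, for every $g \in G$ and $m \in C_{p^\infty}$ we have $m - \phi_{\mathrm{triv}}(g)(m) = 0$, so $[C_{p^\infty}, G] = 0$ and the length of action equals $d = 1$; the same holds on the Lie side. Because $G$ (resp.\ $L$) has nilpotency class $c \leq p - 2$, one obtains
\[
c + d \leq (p - 2) + 1 = p - 1 < p,
\]
so the triples $(G, C_{p^\infty}, \phi_{\mathrm{triv}})$ and $(L, C_{p^\infty}, \psi_{\mathrm{triv}})$ lie in $\mathbf{Tpl}_{Gr}^{p-2,1}$ and $\mathbf{Tpl}_{Lie}^{p-2,1}$ respectively, and satisfy the hypothesis $c + d < p$ of Theorem~B(3).

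Next, the $\mathbf{Exp}$ functor preserves trivial actions, since $\exp(\psi_{\mathrm{triv}})(a) = \sum_{k=0}^{p-1} \psi_{\mathrm{triv}}(a)^k / k! = \id$ for every $a \in L$, hence
\[
\mathbf{Exp}(L, C_{p^\infty}, \psi_{\mathrm{triv}}) = (\mathbf{exp}(L), C_{p^\infty}, \phi_{\mathrm{triv}}).
\]
Applying Theorem~B(3) to the triple $(L, C_{p^\infty}, \psi_{\mathrm{triv}})$ yields a natural isomorphism $H^2_{Lie}(L, C_{p^\infty}, \psi_{\mathrm{triv}}) \cong H^2_{Gr}(\mathbf{exp}(L), C_{p^\infty}, \phi_{\mathrm{triv}})$, which translates to $\M_{Lie}(L) \cong \M_{Gr}(\mathbf{exp}(L))$.

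For naturality in morphisms, I would verify that a Lie algebra homomorphism $L_1 \to L_2$ in $\textbf{Lie}_{p-2}$, paired with $\id_{C_{p^\infty}}$, gives a morphism in $\mathbf{Tpl}_{Lie}^{p-2,1}$ (the compatibility condition $\beta((\psi_1 \circ \alpha(a_2))(m_1)) = (\psi_2(a_2))(\beta(m_1))$ is automatic since both sides vanish), and analogously on the group side. Thus $L \mapsto (L, C_{p^\infty}, \psi_{\mathrm{triv}})$ defines a functor along which Theorem~B(3) restricts to the desired natural equivalence $\M_{Gr} \circ \mathbf{exp} \cong \M_{Lie}$. The main obstacle is the length-of-action bookkeeping: confirming that the trivial action indeed forces $d = 1$, which in turn requires the nilpotency class bound $c \leq p - 2$ precisely to stay inside the range $c + d < p$ of Theorem~B(3). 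Once that is in place, all the content has already been established in the previous section.
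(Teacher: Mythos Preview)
Your argument differs from the paper's in one essential respect. The paper does not apply Theorem~B(3) directly to the coefficient module $C_{p^\infty}$; instead it writes $C_{p^\infty}=\varinjlim C_{p^i}$, invokes Theorem~B(3) for each finite cyclic module $C_{p^i}$ to obtain natural isomorphisms $\tau_i\colon H^2_{Gr}(G,C_{p^i})\to H^2_{Lie}(L,C_{p^i})$, and then sets $\tau_\infty=\varinjlim\tau_i$, using that cohomology commutes with these direct limits.

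Your direct route is shorter, and it is formally licensed by the statement of Theorem~B(3), which is phrased for arbitrary triples in $\mathbf{Tpl}_{Lie}^{c,d}$. The reason the paper takes the detour through finite coefficients is a subtlety you do not address: the proof of Theorem~B(3) rests on Proposition~\ref{pro:H2}, which applies the Lazard correspondence to the middle term $\tilde L$ of an extension $0\to M\to\tilde L\to L\to 0$. The Lazard correspondence in this paper is stated only for \emph{finitely generated} nilpotent pro-$p$ groups and $\ZZ_p$-Lie algebras. When $M=C_{p^\infty}$, the extension $\tilde L$ is not finitely generated and the corresponding group extension is not even a pro-$p$ group, so the correspondence as stated does not apply. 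The paper's colimit argument sidesteps this by keeping every coefficient module finite. Your proof would go through if you either supply the (true) extension of the Lazard correspondence to arbitrary nilpotent $\ZZ_p$-Lie algebras of class $<p$, or insert the same direct-limit step the paper uses.
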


\begin{proof}
Let $G$ be a finite $p$-group of nilpotency class smaller than $p-1$ and write $L=\textbf{log} (G)$. Observe that on the one hand $H_{Gr}^2(G, C_{p^\infty})= \varinjlim H_{Gr}^2(G, C_{p^i})$ and $H_{Lie}^2(L, C_{p^\infty})= \varinjlim H_{Lie}^2(L, C_{p^i})$ as  $C_{p^{\infty}}=\varinjlim (C_p\hookrightarrow C_{p^2} \hookrightarrow C_{p^3}\ldots)$. On the other hand, by theorem \ref{H2} there exists an isomorphism $\tau_i:H_{Gr}^2(G,C_{p^i})\longrightarrow H_{Lie}^2(L,C_{p^i})$ which is given by the natural transformation between the functors $H_{Gr}^2$ and $H_{Lie}^2$ . The isomorphism 
$\tau_\infty=\varinjlim \tau_i : H^2(G,C_{p^\infty})\longrightarrow H^2(L,C_{p^\infty})$ defines a natural equivalence between $\M_{Gr}$ and $\M_{Lie}$.
\end{proof}

The exact sequences of cohomology groups give another way of computing the cohomology of some groups. For example, the inflation-restriction-transgression exact sequence comes from studying spectral sequences. We want to show that this exact sequence commutes with the functors $\textbf{exp}$ and $\textbf{log}$.

Consider $\FF_p$ with trivial group action and denote the cohomology groups by $H_{Gr}^*(G)$ and $H^*_{Lie}(L)$ for short. Let $G$ be a finite group with nilpotency class $c$ and $N \unlhd G$. We use the next characterization of $H_{Gr}^1(\cdot)$ in \cite{Ev} to simplify the computations. Define $H_{Gr}^1(G)= \Hom(G, \FF_p)$ and $H_{Gr}^1(N)^{\frac{G}{N}}=\Hom(\frac{N}{N^p[G,N]},\FF_p)$. Analogously for a Lie algebra $L$ and a Lie ideal $I$ one has that $H_{Lie}^1(L)=\Hom(L, \FF_p)$ and $H_{Lie}^1(I)^{\frac{L}{I}}=\Hom(\frac{I}{pI+[L,I]},\FF_p)$.

\begin{lemma}{\label{tr}} Let $G$ be a finite $p$-group of nilpotency class smaller than $p$ and $N$ a normal subgroup of $G$. Let $L=\textbf{log} (G)$ and $I=\textbf{log} (N)$ and denote by $\mathbf{tr}_{Gr}: H_{Gr}^1(N)^{\frac{G}{N}} \rightarrow H_{Gr}^2(\frac{G}{N})$ and by $\mathbf{tr}_{Lie}: H_{Lie}^1(I)^{\frac{L}{I}} \rightarrow H_{Lie}^2(\frac{L}{I})$ the transgression maps. Then the following diagram is commutative:
\begin{equation}
  \xymatrix{ 
          H^1_{Gr}(N)^{\frac{G}{N}} \ar[r]^{tr_{Gr}} \ar[d]^{\tau^1_N}&H^2_{Gr}(\frac{G}{N}) \ar[d]^{\tau^2_{\frac{G}{N}}}\\
                    H^1_{Lie}(I)^{\frac{L}{I}} \ar[r]^{tr_{Lie}} &H^2_{Lie}(\frac{L}{I}), 
                    }
    \end{equation}
where $\tau^1_N$ and $\tau^2_{\frac{G}{N}}$ are the isomorphism given by the natural equivalence of functors.
\end{lemma}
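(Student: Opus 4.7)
The plan is to realise both transgressions as pushouts of specific central extensions, and to invoke the natural equivalence of $H^2$ from Theorem~\ref{H2} together with the fact that the Lazard functors preserve these pushouts.

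First, I would give an explicit pushout description of the transgression. Setting $A = N/N^p[G,N]$, the containment $[N,N] \le [G,N]$ forces $A$ to be an elementary abelian $p$-group lying centrally in $\bar G = G/N^p[G,N]$, yielding a central extension
\[1 \longrightarrow A \longrightarrow \bar G \longrightarrow G/N \longrightarrow 1.\]
For $\phi \in \Hom(A,\FF_p) = H^1_{Gr}(N)^{G/N}$, the class $\mathbf{tr}_{Gr}(\phi)$ is represented by the pushout of this extension along $\phi$; this is standard either from the Lyndon--Hochschild--Serre spectral sequence or from the classification of $H^2$ by central extensions used throughout Section~\ref{s:groups}. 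Symmetrically, with $B = I/(pI + [L,I])$ and the central Lie extension $0 \to B \to L/(pI+[L,I]) \to L/I \to 0$, the class $\mathbf{tr}_{Lie}(\phi_L)$ is the pushout along $\phi_L$.

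Second, I would check that these two central extensions correspond under the Lazard equivalence. From the Baker--Campbell--Hausdorff formula one has $\textbf{log}(N^p)=pI$ and $\textbf{log}([G,N])=[L,I]$, hence $\textbf{log}(N^p[G,N])=pI+[L,I]$ and $\textbf{log}(\bar G) = L/(pI+[L,I])$. The quotients $A$ and $B$ are then identified on the nose, because on an abelian group of exponent $p$ the maps $\exp$ and $\log$ reduce to the identity, and the isomorphism $\tau^1_N$ is precisely precomposition with this identification $A \cong B$. Since $\bar G$ has nilpotency class at most one more than that of $G/N$ and $\FF_p$ carries the trivial action, all the triples involved stay within the range where Theorem~\ref{H2} applies.

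Finally, the proof is formal. Since $\mathbf{Log}$ is $\ZZ_p$-linear on modules and preserves short exact sequences of modules (Proposition~\ref{10}) as well as extensions by modules (Proposition~\ref{pro:H2}), it preserves the pushouts that compute transgression. Consequently $\tau^2_{G/N}(\mathbf{tr}_{Gr}(\phi))$ equals the pushout of the logged central extension along the logged homomorphism $\tau^1_N(\phi)$, which by the Lie description of transgression is $\mathbf{tr}_{Lie}(\tau^1_N(\phi))$. The main obstacle I expect is the first step: phrasing the transgression as a pushout in a way that is manifestly natural under $\mathbf{Exp}$/$\mathbf{Log}$ and verifying that the elementary abelian quotients $A$ and $B$ really are matched by $\tau^1_N$. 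Once that description is in place, the commutativity of the diagram is dictated by the categorical equivalence of Theorem~A together with the naturality in Theorem~\ref{H2}.
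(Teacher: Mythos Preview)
Your proposal is correct and follows essentially the same approach as the paper: describe $\mathbf{tr}_{Gr}(\phi)$ as the central extension $1\to N/M\to G/M\to G/N\to 1$ (which is exactly the pushout of $1\to A\to \bar G\to G/N\to 1$ along $\phi$, with $M/N^p[G,N]=\ker\phi$), then apply $\mathbf{log}$ and invoke Theorem~\ref{H2}. The paper's proof is terser---it writes down the quotient extension directly rather than in pushout language and does not spell out why $N^p[G,N]$ corresponds to $pI+[L,I]$---but the underlying argument is the same as yours.
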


\begin{proof}The transgression map $tr_{Gr}$ is defined as follows: let $f \in H^1(N)^{\frac{G}{N}} =\Hom(\frac{N}{N^p[G,N]},\FF_p)$ and consider $M \leq G$ such that $\ker(f)=\frac{M}{N^p[G,N]}$. Notice that $\frac{N}{M} \cong \FF_p$ Then, the image of $f$ is the next exact sequence in $H^2(\frac{G}{N})$:

\begin{equation*}
\xymatrix{1 \ar[r] &\frac{M}{N} \ar[r] &\frac{G}{M} \ar[r] &\frac{G}{N} \ar[r] &1.
}
\end{equation*}

If $c < p-1$, it is clear that the $\mathbf{log}$ of this exact sequence 
$$0 \rightarrow \frac{\textbf{log}(N)}{\textbf{log}(M)} \rightarrow \frac{\textbf{log}(G)}{\textbf{log}(M)} \rightarrow \frac{\textbf{log}(G)}{\textbf{log}(N)} \rightarrow 0$$
is the image of $f \in H^1(\textbf{log}(N))^{\textbf{log}(\frac{G}{N})}$ by the transgression map $\mathbf{tr}_{Lie}$.
\end{proof}

A consequence of this lemma is the next result.

\begin{proposition} Suppose that $c < p-1$. Then the following diagram commutes:

\begin{equation*}
\xymatrix{0 \ar[r] &H_{Gr}^1(\frac{G}{N}) \ar[d]^{\tau^1_{\frac{G}{N}}}\ar[r]  &H_{Gr}^1(G) \ar[d]^{\tau^1_G} \ar[r] &H_{Gr}^1(N)^{\frac{G}{N}} \ar[d]^{\tau^1_N} \ar[r]^{tr_{Gr}} &H_{Gr}^2(\frac{G}{N}) \ar[d]^{\tau^2_{\frac{G}{N}}} \ar[r] &H_{Gr}^2(G)\ar[d]^{\tau^2_G}\\
0 \ar[r] &H_{Lie}^1(\frac{L}{I}) \ar[r]  &H_{Lie}^1(L) \ar[r] &H_{Lie}^1(I)^{\frac{L}{I}} \ar[r]^{tr_{Lie}} &H_{Lie}^2(\frac{L}{I}) \ar[r] &H_{Lie}^2(L) }
\end{equation*}
where $L= \textbf{log}(G)$ and $I= \textbf{log}(N)$ and $\tau^i$ are the isomorphisms given by the natural equivalence of functors.
\end{proposition}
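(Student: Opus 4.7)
The plan is to decompose the diagram into its five constituent squares and verify each commutes separately. Since the coefficient module is $\FF_p$ with trivial action, every triple of the form $(H, \FF_p)$ appearing in the diagram (for $H$ equal to $G/N$, $G$, or $N$, and for the corresponding Lie algebras) lies in $\mathbf{Tpl}_{Gr}^{c,1}$ or $\mathbf{Tpl}_{Lie}^{c,1}$, because the trivial action has length one. The hypothesis $c < p-1$ is precisely what is needed so that Theorem~\ref{H1} (with parameters $c < p$ and $1 < p-1$) and Theorem~\ref{H2} (with $c+1 < p$) apply to every triple in sight.

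The two inflation squares---that for $H^1$ and that for $H^2$---will follow from naturality of the isomorphisms $\tau^1$ and $\tau^2$ supplied by Theorems~\ref{H1} and~\ref{H2}. The quotient $\pi\colon G \to G/N$ defines a morphism $(G/N, \FF_p) \to (G, \FF_p)$ in $\mathbf{Tpl}_{Gr}^{c,1}$, and under $\mathbf{Log}$ it corresponds by property~(d) of the Lazard theorem to the Lie-algebra quotient $L \to L/I$ with $I = \textbf{log}(N)$. The induced cohomology maps are precisely inflation on both sides, so naturality of $\tau^i$ yields commutativity of the corresponding square. The restriction square is entirely analogous, using instead the morphism $(G, \FF_p) \to (N, \FF_p)$ coming from the inclusion $N \hookrightarrow G$ together with its Lazard counterpart $I \hookrightarrow L$. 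The transgression square is exactly the content of Lemma~\ref{tr}.

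The one technical point requiring genuine care is that the restriction column be well posed, i.e.\ that $\tau^1_N$ sends the $G/N$-invariant subgroup $H^1(N)^{G/N}$ isomorphically onto $H^1(I)^{L/I}$. The argument I would use is to apply naturality of $\tau^1$ to the conjugation morphism $c_g\colon N \to N$ for each $g \in G$; under $\mathbf{Log}$ such a morphism is transported to the corresponding adjoint automorphism of $I$, and property~(f) of the Lazard theorem ($\Aut(N)=\Aut(I)$) certifies this transport. The induced actions of $G/N$ on $H^1(N)$ and of $L/I$ on $H^1(I)$ are then intertwined by $\tau^1_N$, so invariants correspond to invariants. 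This equivariance check is the main obstacle in the plan; once established, piecing together the five commuting squares immediately yields the proposition.
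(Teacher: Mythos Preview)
Your proposal is correct and follows essentially the same route as the paper, which records the result as a direct consequence of Theorem~\ref{H1}, Theorem~\ref{H2}, and Lemma~\ref{tr}. Your square-by-square verification merely spells out what the paper leaves implicit; the one place you add extra work---checking that $\tau^1_N$ respects the invariant subgroups---the paper handles by invoking the explicit identification $H^1_{Gr}(N)^{G/N}=\Hom\big(N/N^p[G,N],\FF_p\big)$ introduced just before Lemma~\ref{tr}, under which the correspondence with $\Hom\big(I/(pI+[L,I]),\FF_p\big)$ is immediate from Lazard.
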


\begin{proof} This is is a direct consequence of Theorem $\ref{H1}$, Theorem $\ref{H2}$ and Lemma $\ref{tr}$.
\end{proof}


\begin{thebibliography}{99}
\bibitem{Baer} Baer, R. \emph{Erweiterungen von Gruppen und ihren Isomorphismen}, Math. Z.38 (1934), 375–416.

\bibitem{Br} Brown, Kenneth S. \emph{Cohomology of groups}. Springer-Verlag, N.Y. 1982.

\bibitem{ChE} Chevalley, C. and Eilenberg, S. \emph{Cohomology Theory of Lie Groups and Lie Algebras}. Transactions of the American Mathematical Society (Providence, R.I.: American Mathematical Society) 1948.

 \bibitem{EHZ} Eick, B., Horn, M. and Zandi, S. \emph{Schur multipliers and the Lazard correspondence}. Arch. Math. (Basel) \textbf{99} (2012), no. 3, 217--226.

\bibitem{Ev} Evens, L. \emph{The cohomology of groups}. Oxford Mathematical Monographs, Claredon Pres, New York, 1991.

\bibitem{GS} Gonz\'alez-S\'anchez, J. \emph{Kirillov's orbit method for p-groups and pro-p groups}. Comm. Algebra \textbf{37} no. 12 (2009), 4476--4488.

\bibitem{Kh} Khukhro, E.I. \emph{p-Automorphisms of Finite p-Groups}. Cambridge Univ. Press, Cambridge (1998).

\bibitem{Jc} Jacobson, N. \emph{Lie algebras}. Interscience, New York / London (1962).

\bibitem{Ka} D. Kazhdan. \emph{Proof of Springer's hypothesis}. Israel J. Math. \textbf{28} (1977), 272--286.

\bibitem{KL} C. Kassel, J.-L. Loday. \emph{Extensions centrales d'alg\`ebres de Lie}. Ann. Inst.Fourier Grenoble 32,4 (1982) 119--142

\bibitem{La} Lazard, M. \emph{Sur les groupes nilpotents et les anneaux de Lie}. Ann. Sci. Ecole Norm. Sup. (3) \textbf{71} (1954), 101--190. 

\bibitem{OV} O'Brien, E.A. and Vaughan-Lee, M.R. \emph{The groups with order $p^7$ for odd prime $p$}. J. Algebra \textbf{292} (2005), 243--258.


\end{thebibliography}
\end{document}